\newtheorem{thm}{Theorem}[section]
\newtheorem{assum}[thm]{Assumption}
\newtheorem{lem}[thm]{Lemma}
\newtheorem{cor}[thm]{Corollary}
\numberwithin{equation}{section}
\title{\LARGE \bf Nested Distributed Gradient Methods with Stochastic Computation Errors
}
\author{Charikleia Iakovidou$^{1}$ and Ermin Wei$^{2}$
\thanks{*This work was supported by the DARPA award HR-001117S0039.}
\thanks{{C. Iakovidou  and E. Wei are with the Department of Electrical and Computer Engineering, Northwestern University,
        Evanston, IL USA,  
        {\tt\small chariako@u.northwestern.edu, ermin.wei@northwestern.edu}}}
}
\begin{document}

\maketitle
\thispagestyle{empty}
\pagestyle{empty}

\begin{abstract}

In this work, we consider the problem of a network of agents collectively minimizing a sum of convex functions. The agents in our setting can only access their local objective functions and exchange information with their immediate neighbors. Motivated by applications where computation is imperfect, including, but not limited to, empirical risk minimization (ERM) and online learning, we assume that only noisy estimates of the local gradients are available. To tackle this problem, we adapt a class of Nested Distributed Gradient methods (NEAR-DGD) to the stochastic gradient setting. These methods have minimal storage requirements, are communication aware and perform well in settings where gradient computation is costly, while communication is relatively inexpensive. We investigate the convergence properties of our method under standard assumptions for stochastic gradients, i.e. unbiasedness and bounded variance. Our analysis indicates that our method converges to a neighborhood of the optimal solution with a linear rate for local strongly convex functions and appropriate constant steplengths. We also show that distributed optimization with stochastic gradients achieves a noise reduction effect similar to mini-batching, which scales favorably with network size. Finally, we present numerical results to demonstrate the effectiveness of our method.

\end{abstract}

\section{INTRODUCTION}

Consider the setting where a group of agents (nodes) in a connected network coordinate to minimize a global objective
\begin{equation}
    \label{eq:prob_cen}
    \min_{x \in \mathbb{R}^p} f(x) = \sum_{i=1}^n f_i(x),
\end{equation}
where $f:\mathbb{R}^p \rightarrow \mathbb{R}$ is the global objective function, \mbox{$f_i:\mathbb{R}^p \rightarrow \mathbb{R}$} the local objective function accessed by agent $i \in \{1,...,n\}$ and $x \in \mathbb{R}^p$ the global decision variable. This formulation frequently arises in applications such as sensor networks \cite{sensor1,sensor2}, multi-vehicle systems \cite{vehicles} and smart grids \cite{smart_grids}, to name a few.

Every agent~$i$ needs to maintain a local estimate $x_i \in \mathbb{R}^p$ of the global variable $x$. On account of this, problem~\ref{eq:prob_cen} is commonly reformulated as \cite{bertsekas1989parallel}
\begin{equation}
    \label{eq:prob_distr_1}
    \begin{split}
        \min_{x_i \in \mathbb{R}^p} &\sum_{i=1}^n f_i(x_i)\text{, s.t. } x_i = x_j \text{ if $j \in \mathcal{N}_i$},
    \end{split}
\end{equation}
where $\mathcal{N}_i$ is the neighborhood of node $i$. 

Solving problem~\ref{eq:prob_distr_1} is a topic well-studied in literature. Common approaches include first order methods \cite{NedicSubgradientConsensus,nedic_constrained_2010,MouraFastGrad,near_dgd_2017,extra}, primal-dual algorithms \cite{zhu_distr_conv_2012,KIA2015254}, gradient tracking methods \cite{di2016next,qu2017harnessing,diging,push_pull}, the alternating direction method of multipliers (ADMM) \cite{ADMMBoyd} and Newton methods \cite{mokhtari_newton,samira_newton}. 

In recent years, the proliferation of available data has attracted significant interest in distributed systems for machine learning \cite{tsianos_ml_2012,predd_ml_2009,Bekkerman:2011:SUM:2124408,Boyd:2011:DOS:2185815.2185816,duchi_dual_2012,shen_towards_2018}. Moreover, the increasing cost of gradient computation in large-scale systems has motivated the cheaper alternative of stochastic gradient descent (SGD) \cite{sgd_bottou,Bach:2011:NAS:2986459.2986510}. There is an extensive body of work on the combination of distributed systems and stochastic gradients for general network topologies \cite{sundharram_distributed_2010,bianchi_convergence_2011,srivastava_distributed_2011,duchi_dual_2012,nedic_stochastic_2014,towfic_adaptive_2014,shamir_distributed_2014,mokhtari_dsa:_2015,vanli_stochastic_2014,sirb_decentralized_2016,chatzipanagiotis_distributed_2016,lan_communication-efficient_2017,hong_stochastic_2017,morral_success_2017,pu_flocking-based_2017,bijral_data-dependent_2016,lian_can_nodate,lian_asynchronous_2017,nokleby_distributed_2017,pu_distributed_2018-1,jakovetic_convergence_2018,tang_d$^2$:_2018,shen_towards_2018,sahu_communication-efficient_2018,olshevsky_robust_2018,yuan_variance-reduced_2019,xin_distributed_2019,olshevsky_non-asymptotic_2019}. A number of these works have demonstrated that distributed stochastic algorithms are comparable or even outperform their centralized counterparts in some cases  \cite{shamir_distributed_2014,bijral_data-dependent_2016,morral_success_2017,pu_flocking-based_2017,lian_can_nodate,pu_distributed_2018-1,olshevsky_robust_2018,olshevsky_non-asymptotic_2019}. An interesting attribute of some distributed stochastic methods is that they achieve a variance reduction effect similar to mini-batching \cite{pu_flocking-based_2017,pu_distributed_2018-1,olshevsky_robust_2018,olshevsky_non-asymptotic_2019,lian_can_nodate,shen_towards_2018}. However, only a small subset of methods can achieve linear convergence \cite{mokhtari_dsa:_2015,pu_distributed_2018-1,shen_towards_2018,yuan_variance-reduced_2019,olshevsky_robust_2018,xin_distributed_2019}. In addition, none of these methods is adaptable to varying application conditions and some of them have excessive memory requirements \cite{mokhtari_dsa:_2015,shen_towards_2018} or rely on data reshuffling schemes \cite{yuan_variance-reduced_2019}.

A class of Nested Distributed Gradient methods (NEAR-DGD) was first introduced in \cite{near_dgd_2017}. The power of \mbox{NEAR-DGD} lies in its versatile framework, which alternates between gradient steps and an adjustable number of nested consensus steps. Some variants of NEAR-DGD provably achieve exact linear convergence. Furthermore, NEAR-DGD does not rely on previous gradients or more than one previous iterates and has therefore minimal storage requirements. In this work, we develop a class of \mbox{NEAR-DGD} based methods that utilize stochastic gradient approximations. NEAR-DGD is particularly suited for settings where the cost of computation is high, and the use of stochastic gradients could alleviate computational load even further. We add to the existing methods that achieve linear convergence rates and the methods that produce a variance reduction effect proportional to network size.

The rest of this paper is organized as follows. We list the assumptions for our analysis at the end of the current section. In Section~\ref{sec:algo}, we briefly summarize the original and the stochastic NEAR-DGD methods. We derive the convergence properties of the stochastic NEAR-DGD method in Section~\ref{seq:anal}. Finally, in Section~\ref{sec:res}, we present our numerical results and in Section~\ref{sec:final} we conclude this work.

\subsection{Notation}

For the rest of this paper, a local variable at agent $i$ and at iteration count $k$ will be denoted with $v_{i,k} \in \mathbb{R}^p$. The concatenation of all local variables $\mathbf{v}_k=[v_{1,k},...,v_{n,k}]$ will be denoted with lowercase boldface letters, while uppercase boldface letters are reserved for matrices. We will refer to the identity matrix of dimension $p$ as $I_p$ and the vector of ones of dimension $n$ as $1_n$. The notation \mbox{$\mathcal{M}_n:=\frac{1}{n}(1_n1_n^T \otimes I_p)$} denotes the operator that returns the average vector $\bar{v}_k$ across $n$ agents, i.e. $\bar{v}_k = \mathcal{M}_n \mathbf{v}_k = \frac{1}{n}\sum_{i=1}^n v_{i,k}$.

\subsection{Assumptions}
We make the following standard assumption for the local objective functions $f_i$ throughout our analysis.
\begin{assum} \label{assum_fi_mu_L} \textbf{(Local strong convexity and Lipschitz gradients)}
Each local objective function $f_i$ is $\mu_i$-strongly convex and $L_i$-smooth.
\end{assum}
We also assume that agents are unable to compute the true local gradients $\nabla f_i$, but have access to the stochastic gradient estimates $g_i$ that satisfy the following condition.
\begin{assum}\label{assm:err_stoch}\textbf{(Stochastic gradients)}
	Let $g_i(x_{i,k},\xi_{i,k}) \in \mathbb{R}^p$ be the stochastic gradient computed at agent $i$ at iteration $k$ and $\xi_{i,k}$ a random vector. Then for all \mbox{$i=1,...,n$} and $k=0,1,2,...$, $g_i$ is an unbiased estimator of the true local gradient $\nabla f_i(x_{i,k})$ and its variance is bounded, namely
	\begin{gather*}
	    \mathbb{E}[g_i(x_{i,k},\xi_{i,k})| x_{i,k}] = \nabla f_i(x_{i,k}),\\ \mathbb{E}[\|g_i(x_{i,k},\xi_{i,k})-\nabla f_i(x_{i,k})\|^2 | x_{i,k}] \leq \sigma^2,
	\end{gather*}
for some $\sigma>0$.
\end{assum}
Common examples of gradient estimators that satisfy Assumption~\ref{assm:err_stoch} include stochastic and mini-batch gradients.
\section{Algorithm}
\label{sec:algo}

Before introducing the stochastic NEAR-DGD method, we will first reformulate problem~\ref{eq:prob_distr_1} as
\begin{equation}
    \label{eq:prob_distr_2}
    \begin{split}
        \min_{x_i \in \mathbb{R}^{p}} &\sum_{i=1}^n f_i(x_i)\text{, s.t. } (\mathbf{W} \otimes I_p) \mathbf{x} = \mathbf{x},
    \end{split}
\end{equation}
where $\mathbf{W} \in \mathbb{R}^{n \times n}$ is a matrix with the following properties.
\begin{assum}\label{assm:w} \textbf{(Consensus matrix)}
The matrix $\mathbf{W}$ is symmetric, doubly stochastic and its diagonal elements are strictly positive, i.e. $w_{ii} > 0$. Its off-diagonal elements $w_{ij}$ are strictly positive if and only if agents $i$ and $j$ are connected and zero otherwise.
\end{assum}
We will refer to $\mathbf{W}$ as the consensus matrix. Note that $(\mathbf{W} \otimes I_p) \mathbf{x} = \mathbf{x}$ if and only if $x_i = x_j$ for all connected node pairs $i$, $j$.

\subsection{The NEAR-DGD method}

We will now briefly summarize the NEAR-DGD method first published in \cite{near_dgd_2017}. Each iteration of \mbox{NEAR-DGD} is composed of a number of successive consensus steps, during which agents communicate with their neighbors, followed by a gradient step executed locally. Starting from initial point $\mathbf{y}_0=[y_{1,0},...,y_{n,0}] \in \mathbb{R}^{np}$, the system-wide iterates $\mathbf{x}_k$ and $\mathbf{y}_k$ of NEAR-DGD can be written as
\begin{gather}
        \mathbf{x}_k =\mathbf{Z}^{t(k)}\mathbf{y}_k \label{eq:near_dgd_it_n_1_orig},\\
        \mathbf{y}_{k+1} = \mathbf{x}_k - \alpha \nabla \mathbf{f}(\mathbf{x}_k),
        \label{eq:near_dgd_it_n_2_orig}
\end{gather}
where \mbox{$\mathbf{Z}=(\mathbf{W} \otimes I_p) \in \mathbb{R}^{np \times np}$}, $t(k)$ is the number of communication rounds performed at iteration $k$ and $\nabla \mathbf{f}(\mathbf{x}_k)=[\nabla f_1(x_{1,k}),...,\nabla f_n(x_{n,k})]$ is the concatenation of the local gradients at iteration $k$.

\subsection{The stochastic NEAR-DGD method}

For most of this work, we will focus on the variant of NEAR-DGD where the number of consensus steps per iteration is constant, i.e. $t(k)=t$, for some $t>0$. This is also known as NEAR-DGD$^t$ \cite{near_dgd_2017}.

The system-wide iterates $\mathbf{x}_k$ and $\mathbf{y}_k$ of the stochastic NEAR-DGD$^t$ method can be written as
\begin{gather}
        \mathbf{x}_k = \mathbf{Z}^t\mathbf{y}_k \label{eq:near_dgd_it_n_1},\\
        \mathbf{y}_{k+1} =\mathbf{x}_k - \alpha  \mathbf{g}(\mathbf{x}_k,\pmb{\xi}_k).
        \label{eq:near_dgd_it_n_2}
\end{gather}
where $\mathbf{g}(\mathbf{x}_k,\pmb{\xi}_k) \in \mathbb{R}^{np}$ is the concatenation of the local stochastic gradients $g_i(x_{i,k},\xi_{i,k})$ which satisfy Assumption~\ref{assm:err_stoch} for all $i=1,...,n$.

\section{Convergence Analysis}
\label{seq:anal}
\subsection{Preliminaries}
In this subsection, we introduce a number of helpful lemmas necessary for our main analysis. 
\begin{lem}
\label{lem:global_mu_L}
\textbf{(Average strong convexity and Lipschitz gradients)} Let $\bar{f}(x)=\frac{1}{n}\sum_{i=1}^n f_i(x)$ be the average of the local objective functions $f_i$. It follows then that $\bar{f}$ is \mbox{$\mu_{\bar{f}}$-strongly} convex and $L_{\bar{f}}$-smooth, where $\mu_{\bar{f}}=\frac{1}{n}\sum_{i=1}^n \mu_i$ and $L_{\bar{f}}=\frac{1}{n}\sum_{i=1}^n L_i$.
\end{lem}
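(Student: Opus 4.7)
The plan is to prove each of the two properties separately using the standard first-order characterizations applied to each summand, and then to average.

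For strong convexity, I would invoke the defining inequality
\[
  f_i(y) \geq f_i(x) + \langle \nabla f_i(x),\, y-x \rangle + \tfrac{\mu_i}{2}\|y-x\|^2
\]
for every $x,y \in \mathbb{R}^p$ and every $i \in \{1,\dots,n\}$. Summing over $i$, dividing by $n$, and using the linearity of both the gradient operator and the sum on the right-hand side gives precisely
\[
  \bar f(y) \geq \bar f(x) + \langle \nabla \bar f(x),\, y-x \rangle + \tfrac{\mu_{\bar f}}{2}\|y-x\|^2,
\]
with $\mu_{\bar f} = \tfrac{1}{n}\sum_i \mu_i$, which is the strong convexity inequality for $\bar f$.

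For the Lipschitz-gradient property, I would use the characterization $\|\nabla f_i(x) - \nabla f_i(y)\| \leq L_i \|x-y\|$, write $\nabla \bar f(x) - \nabla \bar f(y) = \tfrac{1}{n}\sum_i (\nabla f_i(x) - \nabla f_i(y))$, and then apply the triangle inequality followed by each $f_i$'s Lipschitz bound to obtain
\[
  \|\nabla \bar f(x) - \nabla \bar f(y)\| \leq \tfrac{1}{n}\sum_{i=1}^n L_i \|x-y\| = L_{\bar f}\|x-y\|.
\]

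There is no genuine obstacle here: the result is a direct consequence of the fact that both strong convexity (in its first-order form) and the Lipschitz gradient property are preserved under convex combinations of functions. The only small care needed is to pick the characterization of $L$-smoothness that averages cleanly; either the gradient-Lipschitz form (via the triangle inequality) or the quadratic upper-bound descent-lemma form (via termwise averaging) works, and both yield the stated constant $L_{\bar f} = \tfrac{1}{n}\sum_i L_i$.
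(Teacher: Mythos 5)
Your argument is correct and is precisely the routine averaging of the first-order strong-convexity inequalities and of the gradient-Lipschitz bounds (via the triangle inequality) that the paper has in mind when it dismisses the lemma as a ``direct consequence'' of Assumption~\ref{assum_fi_mu_L}, giving no written proof of its own. Nothing further is needed: each $f_i$ is differentiable by smoothness, so the first-order characterization of $\mu_i$-strong convexity you invoke is valid, and both properties pass through the convex combination with the stated constants $\mu_{\bar f}=\frac{1}{n}\sum_i\mu_i$ and $L_{\bar f}=\frac{1}{n}\sum_i L_i$.
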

Notice that Lemma~\ref{lem:global_mu_L} is a direct consequence of Assumption~\ref{assum_fi_mu_L}.
The following lemma is adapted from  \cite[Theorem 2.1.11, Chapter 2]{nesterov_introductory_1998}.
\begin{lem} 		
\label{lem:conv_smooth}
	\textbf{(Strongly convex functions with Lipschitz gradients)} Let $f:\mathbb{R}^p \rightarrow \mathbb{R}$ be a $\mu$-strongly convex and $L$-smooth function. Then for all $x \in \mathbb{R}^p$, we have
	\begin{equation*}
    \langle x-x^\star,\nabla f(x) \rangle \geq \frac{\mu L}{\mu+L}\|x-x^\star\|^2 + \frac{1}{\mu+L}\|\nabla f(x)\|^2,
\end{equation*}
where $x^\star=\arg \min_x f(x)$. 
\end{lem}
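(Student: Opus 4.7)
The plan is to reproduce the classical argument of Nesterov, whose key idea is to reduce the strongly convex case to a co-coercivity estimate applied to an auxiliary convex function with Lipschitz gradient. Specifically, I would introduce
\[
\phi(x) := f(x) - \tfrac{\mu}{2}\|x\|^2,
\]
and first verify that $\phi$ is convex (since $f$ is $\mu$-strongly convex) and that $\nabla \phi$ is $(L-\mu)$-Lipschitz (since $\nabla f$ is $L$-Lipschitz and the quadratic contributes a $\mu$-Lipschitz term of opposite sign). These two facts together imply the standard co-coercivity inequality
\[
\langle \nabla \phi(x) - \nabla \phi(y), x-y\rangle \geq \tfrac{1}{L-\mu}\|\nabla \phi(x) - \nabla \phi(y)\|^2
\]
for every $x,y$, which I would cite as a known consequence of convex smoothness (e.g.\ Baillon--Haddad).

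Next I would specialize this inequality with $y = x^\star$. Using $\nabla \phi(x) = \nabla f(x) - \mu x$ and the optimality condition $\nabla f(x^\star) = 0$, both sides reduce to expressions in $\nabla f(x)$ and $x-x^\star$. Expanding the squared norm on the right and collecting the $\langle \nabla f(x), x-x^\star\rangle$ terms on the left yields, after multiplying by $L-\mu$,
\[
(L+\mu)\langle \nabla f(x), x-x^\star\rangle \geq \|\nabla f(x)\|^2 + \mu L \|x-x^\star\|^2,
\]
and dividing by $L+\mu$ gives exactly the claimed bound.

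Finally, I would note the degenerate case $L = \mu$ separately: here $f$ must be a quadratic of the form $\tfrac{\mu}{2}\|x-x^\star\|^2 + c$, in which case $\nabla f(x) = \mu(x-x^\star)$ and the inequality holds with equality; this avoids dividing by $L-\mu=0$ in the co-coercivity step. The main \emph{potential} obstacle is not conceptual but bookkeeping: keeping the algebraic rearrangement clean so that the cross terms $-2\mu \langle \nabla f(x), x-x^\star\rangle$ produced by expanding $\|\nabla f(x) - \mu(x-x^\star)\|^2$ combine correctly with the $(L-\mu)\langle \nabla f(x),x-x^\star\rangle$ term on the left to produce the coefficient $L+\mu$. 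Since this is a short and entirely standard calculation, I would likely just cite Nesterov's Theorem 2.1.11 in the formal write-up and omit the full derivation.
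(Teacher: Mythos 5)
Your proposal is correct and matches the paper's treatment: the paper offers no proof of its own but simply cites Nesterov's Theorem 2.1.11, and your argument (co-coercivity of $\phi(x)=f(x)-\tfrac{\mu}{2}\|x\|^2$ specialized at $y=x^\star$ with $\nabla f(x^\star)=0$, plus the separate $L=\mu$ case) is exactly Nesterov's proof of that theorem; the algebra you outline does yield $(L+\mu)\langle \nabla f(x),x-x^\star\rangle \geq \|\nabla f(x)\|^2+\mu L\|x-x^\star\|^2$. One small caveat: the $(L-\mu)$-Lipschitzness of $\nabla\phi$ does not follow from a naive triangle-inequality cancellation (that only gives $L+\mu$); it follows because, for convex $f$, $L$-smoothness is equivalent to convexity of $\tfrac{L}{2}\|x\|^2-f(x)$, which is exactly $\tfrac{L-\mu}{2}\|x\|^2-\phi(x)$ being convex — a standard fact, so the proof stands.
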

In the following lemma, we show that centralized SGD converges to a neighborhood of the solution under the conditions described in Assumption~\ref{assm:err_stoch} and for appropriate steplength choices. This result is necessary for proving the convergence properties of the stochastic NEAR-DGD$^t$ method in the next subsection.
\begin{lem} 		
\label{lem:grad_err_cen}
	\textbf{(Stochastic gradient descent)} 
	Consider the problem 
	\begin{equation}
	\label{eq:prob_centr}
	    \min_{x\in\mathbb{R}^p} f(x),
	\end{equation}
	where $f$ is a $\mu$-strongly convex and $L$-smooth function and let $\{x_k\}$ be the sequence generated by the stochastic gradient method with constant steplength $\alpha$
\begin{equation*}
    x_{k+1}=x_k - \alpha g(x_k,\xi_k)
\end{equation*}
where $\xi_k$ is a random vector and $g$ is an unbiased estimator of the true gradient $\nabla f$ with bounded variance, i.e. $\mathbb{E}[g(x_{k},\xi_{k})| x_{k}] = \nabla f(x_{k})$ and $\mathbb{E}[\|g(x_{k},\xi_{k})-\nabla f(x_{k})\|^2 | x_{k}] \leq \sigma^2$, for some $\sigma>0$. Also let the steplength $\alpha$ satisfy
\begin{equation*}
    \alpha \leq \frac{2}{\mu + L},
\end{equation*}
and let $\mathcal{F}_k$ be a $\sigma$-algebra containing all the information generated by $\{x_k\}$ up to and including iteration $k$. Then for all $k=0,1,2,...$, we have 
\begin{equation}
\label{eq:grad_err_sto}
    \mathbb{E}[\|x_{k+1} - x^\star\|^2 | \mathcal{F}_k] \leq (1-2\alpha\gamma)\|x_k - x^\star\|^2 +  \alpha^2 \sigma^2,
\end{equation}
where $\gamma=\frac{\mu L}{\mu + L}$.

Thus, the stochastic gradient method converges in expectation to a $\mathcal{O}\bigg( \frac{\alpha \sigma^2}{\gamma}\bigg)$ neighborhood of the solution of problem~(\ref{eq:prob_centr}) with a linear rate.
\end{lem}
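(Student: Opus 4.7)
The plan is to derive the recursive inequality (\ref{eq:grad_err_sto}) by expanding $\|x_{k+1}-x^\star\|^2$, taking conditional expectation, and invoking Lemma~\ref{lem:conv_smooth} to turn the inner product with $\nabla f(x_k)$ into a linear combination of $\|x_k-x^\star\|^2$ and $\|\nabla f(x_k)\|^2$. The steplength bound $\alpha\leq 2/(\mu+L)$ will then be used to eliminate the $\|\nabla f(x_k)\|^2$ term that appears from the stochastic second-moment estimate.

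First I would substitute the update rule to get
\begin{equation*}
\|x_{k+1}-x^\star\|^2 = \|x_k-x^\star\|^2 - 2\alpha\langle x_k-x^\star, g(x_k,\xi_k)\rangle + \alpha^2\|g(x_k,\xi_k)\|^2,
\end{equation*}
and condition on $\mathcal{F}_k$. Unbiasedness replaces $g(x_k,\xi_k)$ with $\nabla f(x_k)$ in the cross term, and the standard bias--variance decomposition $\mathbb{E}[\|g\|^2|\mathcal{F}_k] = \|\nabla f(x_k)\|^2 + \mathbb{E}[\|g-\nabla f(x_k)\|^2|\mathcal{F}_k]$ together with the bounded-variance hypothesis gives $\mathbb{E}[\|g(x_k,\xi_k)\|^2 | \mathcal{F}_k] \leq \|\nabla f(x_k)\|^2 + \sigma^2$.

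Next I would apply Lemma~\ref{lem:conv_smooth} to the inner product, yielding
\begin{equation*}
-2\alpha\langle x_k-x^\star,\nabla f(x_k)\rangle \leq -2\alpha\gamma\|x_k-x^\star\|^2 - \tfrac{2\alpha}{\mu+L}\|\nabla f(x_k)\|^2.
\end{equation*}
Combining everything, the coefficient in front of $\|\nabla f(x_k)\|^2$ becomes $\alpha\bigl(\alpha-\tfrac{2}{\mu+L}\bigr)$, which is non-positive precisely under the hypothesis on $\alpha$. Dropping this non-positive term yields inequality (\ref{eq:grad_err_sto}).

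For the neighborhood claim, I would take total expectation in (\ref{eq:grad_err_sto}) and unroll the resulting linear recursion: with $\rho := 1-2\alpha\gamma \in [0,1)$, this gives $\mathbb{E}\|x_k-x^\star\|^2 \leq \rho^k\|x_0-x^\star\|^2 + \alpha^2\sigma^2\sum_{j=0}^{k-1}\rho^j \leq \rho^k\|x_0-x^\star\|^2 + \tfrac{\alpha\sigma^2}{2\gamma}$, so the iterates contract linearly to an $\mathcal{O}(\alpha\sigma^2/\gamma)$ ball around $x^\star$. The only delicate point is verifying that $\rho\in[0,1)$, which follows from $\alpha\leq 2/(\mu+L)$ and $\gamma=\mu L/(\mu+L)\leq (\mu+L)/4$, so no additional assumption on $\alpha$ is needed. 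The main (mild) obstacle is keeping the steplength bookkeeping tight enough that the $\|\nabla f(x_k)\|^2$ term cancels cleanly rather than leaving an unwanted residual.
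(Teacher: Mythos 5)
Your proposal is correct and follows essentially the same route as the paper's proof: expand the square, condition on $\mathcal{F}_k$ using unbiasedness and the variance bound, apply Lemma~\ref{lem:conv_smooth} to the cross term, use $\alpha \leq 2/(\mu+L)$ to drop the non-positive $\|\nabla f(x_k)\|^2$ coefficient, and unroll the recursion to reach the $\alpha\sigma^2/(2\gamma)$ neighborhood. Your extra check that $1-2\alpha\gamma \in [0,1)$ via $\gamma \leq (\mu+L)/4$ is a small tidy addition the paper leaves implicit.
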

\begin{proof} Consider,
\begin{equation}
\label{eq:pf_stochg}
    \begin{split}
        \|x_{k+1}-x^\star\|^2 &= \|x_k - \alpha g(x_k,\xi_k)- x^\star\|^2 \\ 
        & = \|x_k - x^\star\|^2 +\alpha^2 \|g(x_k,\xi_k)\|^2\\ 
        &-2\alpha \langle x_k - x^\star ,g(x_k,\xi_k) \rangle.
    \end{split}
\end{equation}
For the magnitude of the stochastic gradient $g$, we have
\begin{equation*}
    \begin{split}
        \|g(x_k,\xi_k)\|^2 &= \|g(x_k,\xi_k) - \nabla f (x_k) + \nabla f (x_k) \|^2 \\
        &= \|g(x_k,\xi_k) - \nabla f(x_k)\|^2 + \|\nabla f (x_k)\|^2 \\&- 2\langle \nabla f(x_k), \nabla f(x_k) - g(x_k,\xi_k) \rangle.
    \end{split}
\end{equation*}
We furthermore have
\begin{equation*}
    \begin{split}
        \langle x_k - x^\star ,g(x_k,\xi_k) \rangle &=  \langle x_k - x^\star ,\nabla f(x_k) \rangle \\
        &+\langle x_k - x^\star , g(x_k,\xi_k) - \nabla f(x_k) \rangle.
    \end{split}
\end{equation*}
Combining all of the above and taking the conditional expectation on both sides of (\ref{eq:pf_stochg}) with respect to $\mathcal{F}_k$, yields
\begin{equation*}
    \begin{split}
        \mathbb{E}[\|x_{k+1}-x^\star\|^2 |\mathcal{F}_k]&\leq \|x_k - x^\star\|^2 + \alpha^2 \sigma^2 +\alpha^2 \|\nabla f (x_k)\|^2 \\&-2\alpha \langle x_k - x^\star ,\nabla f(x_k) \rangle.
    \end{split}
\end{equation*}
Finally, by {applying} Lemma~\ref{lem:conv_smooth} {to the last term,} we obtain
\begin{equation*}
    \begin{split}
        \mathbb{E}[\|&x_{k+1}-x^\star\|^2|\mathcal{F}_k] \leq \|x_k - x^\star\|^2 + \alpha^2\sigma^2 \\&+ \alpha^2\|\nabla f(x_k)\|^2 
         -2\alpha \gamma \| x_k - x^\star\|^2 - \frac{2\alpha}{\mu+L}\|\nabla f(x_k)\|^2.
    \end{split}
\end{equation*}
By grouping the terms together and noticing that the coefficient of $\|\nabla f (x_k)\|^2$ is non-positive due to the definition of $\alpha$, we obtain (\ref{eq:grad_err_sto}).
Applying (\ref{eq:grad_err_sto}) recursively and taking the total expectation, yields
\begin{equation*}
    \mathbb{E}[\|x_{k} - x^\star\|^2 ] \leq (1-2\alpha\gamma)^k \|x_0 - x^\star\|^2 +  \frac{\alpha \sigma^2}{2\gamma}.
    \label{eq:grad_err_stoch}
\end{equation*}
Notice that
\begin{equation*}
    \lim_{k \rightarrow \infty} \sup \mathbb{E}[\|x_{k} - x^\star\|^2 ] = \frac{\alpha \sigma^2}{2\gamma},
\end{equation*}
which completes the proof.
\end{proof}

\subsection{Main Results}
We can now begin to derive the convergence properties of the stochastic NEAR-DGD$^t$ method. We will closely follow the analysis in \cite{near_dgd_2017}. We start by proving that the magnitude of the system-wide stochastic NEAR-DGD$^t$ iterates is upper bounded in expectation.

\begin{lem}
\label{lem:bounded_iter}
\textbf{(Bounded iterates)} Let $\{\mathbf{x}_k\}$ and $\{\mathbf{y}_k\}$ be the sequences generated by the stochastic NEAR-DGD$^t$ method under Assumptions~\ref{assum_fi_mu_L} and \ref{assm:err_stoch}, from initial point $\mathbf{y}_0 \in \mathbb{R}^{np}$. Also, let the steplength $\alpha$ satisfy 
\begin{equation*}
    \alpha \leq \min_i \bigg(\frac{2}{\mu_i + L_i}\bigg).
\end{equation*}
Then the $\mathbf{x}_k$ and $\mathbf{y}_k$ iterates of the stochastic NEAR-DGD$^t$ method are bounded in expectation for all $k=0,1,2,...$, namely
\begin{equation*}
    \mathbb{E}[\|\mathbf{x}_k\|^2]\leq D^2\text{, }\mathbb{E}[\|\mathbf{y}_k\|^2] \leq D^2,
\end{equation*}
where $D^2=2\|\mathbf{y}_0-\mathbf{u}^\star\|^2 +  \frac{8+2\nu^3}{\nu^3}\|\mathbf{u}^\star\|^2+  \frac{2}{\nu^2}\Delta$, $\mathbf{u}^\star=[u_1^\star,...,u_n^\star]$, $u_i^\star=\arg \min_{u_i} f_i(u_i) $, $\nu = 2\alpha\gamma$, $\gamma = \min_i \gamma_i$, $\gamma_i = \frac{\mu_i L_i}{\mu_i + L_i}$ and $\Delta= n \alpha^2 \sigma^2$.
\end{lem}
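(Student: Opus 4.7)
The plan is to bound $\mathbb{E}[\|\mathbf{y}_k\|^2]$ first and then derive the bound on $\mathbb{E}[\|\mathbf{x}_k\|^2]$ for free from the non-expansiveness of the consensus step. Under Assumption~\ref{assm:w} the matrix $\mathbf{W}$ is symmetric and doubly stochastic, so its eigenvalues lie in $[-1,1]$ and $\|\mathbf{Z}^t\|\leq 1$, which gives $\|\mathbf{x}_k\|=\|\mathbf{Z}^t\mathbf{y}_k\|\leq\|\mathbf{y}_k\|$. Thus the whole argument reduces to a recursive bound on $\mathbb{E}[\|\mathbf{y}_k-\mathbf{u}^\star\|^2]$, where $\mathbf{u}^\star$ is the stacked vector of local minimizers (each satisfying $\nabla f_i(u_i^\star)=0$).

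The key observation is that the update $\mathbf{y}_{k+1}=\mathbf{x}_k-\alpha\mathbf{g}(\mathbf{x}_k,\pmb{\xi}_k)$ decomposes into $n$ independent single-step stochastic gradient descent iterations, each on a local strongly convex and smooth function $f_i$ with minimizer $u_i^\star$, and the steplength restriction $\alpha\leq \min_i 2/(\mu_i+L_i)$ is precisely what is needed to invoke Lemma~\ref{lem:grad_err_cen} for every $i$. Summing the resulting component-wise bounds and weakening $\gamma_i\geq\gamma$ uniformly yields $\mathbb{E}[\|\mathbf{y}_{k+1}-\mathbf{u}^\star\|^2\mid\mathcal{F}_k]\leq(1-\nu)\|\mathbf{x}_k-\mathbf{u}^\star\|^2+\Delta$. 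To feed $\mathbf{x}_k=\mathbf{Z}^t\mathbf{y}_k$ into this inequality, I would split $\mathbf{x}_k-\mathbf{u}^\star=\mathbf{Z}^t(\mathbf{y}_k-\mathbf{u}^\star)+(\mathbf{Z}^t-I)\mathbf{u}^\star$ and apply Young's inequality $\|a+b\|^2\leq(1+\epsilon)\|a\|^2+(1+\epsilon^{-1})\|b\|^2$, then invoke $\|\mathbf{Z}^t\|\leq 1$ and $\|\mathbf{Z}^t-I\|\leq 2$ to bound the cross term by $4\|\mathbf{u}^\star\|^2$.

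The main non-routine ingredient is the calibrated choice $\epsilon=\nu$, which produces the cancellation $(1-\nu)(1+\nu)=1-\nu^2$ in the composed recursion $\mathbb{E}[\|\mathbf{y}_{k+1}-\mathbf{u}^\star\|^2\mid\mathcal{F}_k]\leq(1-\nu^2)\|\mathbf{y}_k-\mathbf{u}^\star\|^2+\tfrac{4(1-\nu^2)}{\nu}\|\mathbf{u}^\star\|^2+\Delta$, preserving a geometric contraction with factor $1-\nu^2$ while collecting the residual into exactly the form that matches the advertised $D^2$; any generic $\epsilon$ gives a valid bound, but only $\epsilon=\nu$ reproduces the stated constants. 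Taking total expectations, iterating, bounding the geometric series $\sum_{j=0}^{\infty}(1-\nu^2)^j=1/\nu^2$, and then applying $\|\mathbf{y}_k\|^2\leq 2\|\mathbf{y}_k-\mathbf{u}^\star\|^2+2\|\mathbf{u}^\star\|^2$ assembles the constants into $2\|\mathbf{y}_0-\mathbf{u}^\star\|^2+\tfrac{8+2\nu^3}{\nu^3}\|\mathbf{u}^\star\|^2+\tfrac{2\Delta}{\nu^2}=D^2$; the bound on $\mathbf{x}_k$ is then immediate from the non-expansiveness observation of the first paragraph.
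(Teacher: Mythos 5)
Your proposal is correct and follows essentially the same route as the paper: apply Lemma~\ref{lem:grad_err_cen} componentwise to get the $(1-\nu)$ contraction in $\|\mathbf{x}_k-\mathbf{u}^\star\|^2$, split $\mathbf{x}_k-\mathbf{u}^\star$ via $\mathbf{Z}^t\mathbf{u}^\star$ with Young's inequality calibrated at $\epsilon=\nu$ to get the $1-\nu^2$ factor, iterate the geometric recursion, and pass from $\mathbf{y}_k$ to $\mathbf{x}_k$ by non-expansiveness of $\mathbf{Z}^t$. The only differences are trivial bookkeeping (e.g.\ keeping the factor $1-\nu^2$ in the residual before dropping it), and your constants assemble to the same $D^2$.
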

\begin{proof} At each iteration of (\ref{eq:near_dgd_it_n_2}), agent $i \in \{1,...,n\}$ takes a stochastic gradient step on its local function $f_i$. Therefore, by Lemma~\ref{lem:grad_err_cen} the local iterates $y_{i,k}$ satisfy
\begin{equation*}
    \begin{split}
   \mathbb{E}[ \|y_{i,k+1} - &u_i^\star\|^2 | \mathcal{F}_k] =\\
   &= \mathbb{E}[ \|x_{i,k} -\alpha g_i(x_{i,k},\xi_{i,k}) - u_i^\star\|^2| \mathcal{F}_k] \\
    &\leq (1-2\alpha \gamma_i)\|x_{i,k} - u_i^\star\|^2 + \alpha^2\sigma^2,
    \end{split}
\end{equation*}
where $\gamma_i=\frac{\mu_i L_i}{\mu_i + L_i}$.

For the system $\mathbf{y}_k$ iterates, we therefore have
\begin{equation*}
    \begin{split}
       \mathbb{E}[ \|\mathbf{y}_{k+1} - &\mathbf{u}^\star\|^2 | \mathcal{F}_k] = \sum_{i=1}^n \mathbb{E} [\|y_{i,k+1}-u_i^\star\|^2 | \mathcal{F}_k] \\
        &\leq \sum_{i=1}^n \bigg((1-2\alpha \gamma_i)\|x_{i,k} - u_i^\star\|^2 + \alpha^2 \sigma^2\bigg) \\
        & \leq (1-2\alpha\gamma)\sum_{i=1}^n\|x_{i,k}-u_i^\star\|^2 +  n \alpha^2 \sigma^2 \\
        & = (1-\nu) \|\mathbf{x}_k - \mathbf{u}^\star\|^2 + \Delta,
    \end{split}
\end{equation*}
where we invoked Lemma~\ref{lem:grad_err_cen} for the first inequality. We obtain the second inequality from the definition of $\gamma$ and the final equality from the definitions of $\nu$ and $\Delta$.

By the definition of $\mathbf{x}_k$ (\ref{eq:near_dgd_it_n_1}), we further have
\begin{equation*}
    \begin{split}
      \mathbb{E}[ \|\mathbf{y}_{k+1} - &\mathbf{u}^\star\|^2 | \mathcal{F}_k]   \leq (1-\nu)\|\mathbf{x}_k - \mathbf{u}^\star\|^2 + \Delta \\
    & = (1-\nu)\|\mathbf{Z}^t(\mathbf{y}_k -\mathbf{u}^\star) - (I-\mathbf{Z}^t) \mathbf{u}^\star\|^2 + \Delta,
    \end{split}
\end{equation*}
where we have used (\ref{eq:near_dgd_it_n_1}) and added and subtracted $\mathbf{Z}^t\mathbf{u}^\star$. 

Notice that the following relation holds
\begin{equation*}
\begin{split}
    \|\mathbf{Z}^t(\mathbf{y}_k -\mathbf{u}^\star) - (I-\mathbf{Z}^t) &\mathbf{u}^\star\|^2 \leq (1+\nu) \|\mathbf{Z}^t(\mathbf{y}_k -\mathbf{u}^\star)\|^2\\
    &+ (1+\nu^{-1})\|(I-\mathbf{Z}^t) \mathbf{u}^\star\|^2,
    \end{split}
\end{equation*}
which yields
\begin{equation*}
    \begin{split}
    \mathbb{E}[ \|\mathbf{y}_{k+1} - &\mathbf{u}^\star\|^2 | \mathcal{F}_k]  \leq (1-\nu^2)\|\mathbf{Z}^t \mathbf{y}_k -\mathbf{u}^\star\|^2\\
        &+\frac{1-\nu^2}{\nu}\|(I-\mathbf{Z}^t )\mathbf{u}^\star\|^2 + \Delta \\
        & \leq (1-\nu^2)\|\mathbf{Z}^t \|^2\|\mathbf{y}_k -\mathbf{u}^\star\|^2\\
        &+\frac{1}{\nu}\|I-\mathbf{Z}^t\|^2\|\mathbf{u}^\star\|^2 + \Delta \\
        & \leq (1-\nu^2) \|\mathbf{y}_k -\mathbf{u}^\star\|^2+ \frac{4}{\nu}\| \mathbf{u}^\star\|^2 + \Delta,
    \end{split}
\end{equation*}
where we used Cauchy-Schwarz to get the second inequality and the spectral properties of $\mathbf{W}$ for the final inequality.

Recursively computing the expectation conditioned on the initial $\sigma$-algebra $\mathcal{F}_0$, i.e. the full expectation, yields
\begin{equation*}
    \begin{split}
        \mathbb{E}[\|\mathbf{y}_{k+1}-\mathbf{u}^\star&\|^2] \leq (1-\nu^2) ^{k+1}\|\mathbf{y}_0-\mathbf{u}^\star\|^2 \\
        &+ \frac{4}{\nu}\|\mathbf{u}^\star\|^2\sum_{m=1}^{k+1} (1-\nu^2)^m + \Delta\sum_{m=0}^{k}( 1-\nu^2)^m \\
         & \leq \|\mathbf{y}_0-\mathbf{u}^\star\|^2+ \frac{4(1-\nu^2)}{\nu^3}\|\mathbf{u}^\star\|^2+  \frac{\Delta}{\nu^2} \\
         & \leq \|\mathbf{y}_0-\mathbf{u}^\star\|^2 +  \frac{4}{\nu^3}\|\mathbf{u}^\star\|^2+  \frac{\Delta}{\nu^2} .
    \end{split}
\end{equation*}
Note that for all $k=0,1,2...$, we have
\begin{equation*}
    \begin{split}
        \|\mathbf{y}_{k+1}\|^2 &\leq 2\|\mathbf{y}_{k+1}-\mathbf{u}^\star\|^2 + 2\|\mathbf{u}^\star\|^2.
    \end{split}
    \label{eq:bound_y}
\end{equation*}
Taking the full expectation on both sides, we obtain
\begin{equation*}
    \begin{split}
         \mathbb{E}[\|\mathbf{y}_{k+1}\|^2] &\leq  2\mathbb{E}[\|\mathbf{y}_{k+1}-\mathbf{u}^\star\|^2] + 2\|\mathbf{u}^\star\|^2 \\
         & \leq 2 \|\mathbf{y}_0-\mathbf{u}^\star\|^2 +  \frac{8+2\nu^3}{\nu^3}\|\mathbf{u}^\star\|^2+  \frac{2}{\nu^2}\Delta.
    \end{split}
    \label{eq:bound_y}
\end{equation*}
Finally, for the $\mathbf{x}_k$ iterates we have
\begin{equation*}
    \begin{split}
       \mathbb{E}[ \|\mathbf{x}_{k}\|^2] &= \mathbb{E}[\|\mathbf{Z}^t\mathbf{y}_{k}\|^2] \\
        & \leq \|\mathbf{Z}^t\|^2\mathbb{E}[\|\mathbf{y}_{k}\|^2] \\
        & \leq \mathbb{E}[\|\mathbf{y}_{k}\|^2].
    \end{split}
    \label{eq:bound_x}
\end{equation*}
where we have used (\ref{eq:near_dgd_it_n_1}), the Cauchy-Schwarz inequality and the fact that $\|\mathbf{Z}^t\|=1$.
Applying the definition of $D^2$ completes the proof.
\end{proof}

In the following Corollary, we derive the relations between the average stochastic NEAR-DGD$^t$ iterates $\bar{x}_k=\mathcal{M}_n \mathbf{x}_k$ and \mbox{$\bar{y}_k=\mathcal{M}_n \mathbf{y}_k$}.
\begin{cor}
\label{cor:mean_iter}
\textbf{(Average iterates)} Let $\bar{x}_k=\mathcal{M}_n \mathbf{x}_k$ and \mbox{$\bar{y}_k=\mathcal{M}_n \mathbf{y}_k$} denote the average iterates generated by the stochastic NEAR-DGD$^t$ method. Then the following relations hold
\begin{gather*}
    \bar{x}_k = \bar{y}_k,\\
    \bar{y}_{k+1} = \bar{x}_k - \alpha \bar{g}_k
\end{gather*}
where $\bar{g}_k = \mathcal{M}_n \mathbf{g}(\mathbf{x}_k,\pmb{\xi}_k)$.
\end{cor}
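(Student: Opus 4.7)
The plan is to prove the two identities by applying the averaging operator $\mathcal{M}_n = \tfrac{1}{n}(1_n 1_n^T \otimes I_p)$ directly to the system-wide recursions (\ref{eq:near_dgd_it_n_1}) and (\ref{eq:near_dgd_it_n_2}), and exploiting the double stochasticity of $\mathbf{W}$.

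For the first relation, I would start from $\mathbf{x}_k = \mathbf{Z}^t \mathbf{y}_k$ and left-multiply by $\mathcal{M}_n$ to get $\bar{x}_k = \mathcal{M}_n \mathbf{Z}^t \mathbf{y}_k$. The key observation is that because $\mathbf{W}$ is doubly stochastic (Assumption~\ref{assm:w}), we have $1_n^T \mathbf{W} = 1_n^T$, hence $1_n^T \mathbf{W}^t = 1_n^T$ for every $t \geq 0$. Using $\mathbf{Z}^t = \mathbf{W}^t \otimes I_p$ and the mixed-product property of the Kronecker product, this gives $\mathcal{M}_n \mathbf{Z}^t = \tfrac{1}{n}(1_n 1_n^T \mathbf{W}^t \otimes I_p) = \tfrac{1}{n}(1_n 1_n^T \otimes I_p) = \mathcal{M}_n$. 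Therefore $\bar{x}_k = \mathcal{M}_n \mathbf{y}_k = \bar{y}_k$.

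For the second relation, I would apply $\mathcal{M}_n$ to both sides of (\ref{eq:near_dgd_it_n_2}). Since $\mathcal{M}_n$ is a linear operator, this immediately yields
\begin{equation*}
\bar{y}_{k+1} = \mathcal{M}_n \mathbf{y}_{k+1} = \mathcal{M}_n \mathbf{x}_k - \alpha \mathcal{M}_n \mathbf{g}(\mathbf{x}_k,\pmb{\xi}_k) = \bar{x}_k - \alpha \bar{g}_k,
\end{equation*}
by the definitions of $\bar{x}_k$ and $\bar{g}_k$.

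There is no real obstacle here; the corollary is essentially a bookkeeping consequence of linearity and the doubly stochastic structure of $\mathbf{Z}$. The only subtlety worth spelling out is the identity $\mathcal{M}_n \mathbf{Z}^t = \mathcal{M}_n$, which is the reason consensus steps leave the network-wide average invariant and is the statement that makes the first identity non-trivial.
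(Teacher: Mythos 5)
Your proof is correct and follows essentially the same route as the paper: multiply the two recursions by $\mathcal{M}_n$ and use the double stochasticity of $\mathbf{W}$ to get $\mathcal{M}_n \mathbf{Z}^t = \mathcal{M}_n$. You simply spell out the Kronecker-product step that the paper leaves implicit.
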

\begin{proof}
This result is obtained in a straightforward manner by multiplying (\ref{eq:near_dgd_it_n_1}) and (\ref{eq:near_dgd_it_n_2}) with $\mathcal{M}_n$ and noticing that $\mathcal{M}_n \mathbf{Z} = \mathcal{M}_n$ by the double stochasticity of $\mathbf{Z}$.
\end{proof}

We proceed by bounding the  variance of the local stochastic NEAR-DGD$^t$ iterates $x_{i,k}$ and $y_{i,k}$. The variance of the local iterates is a measure of the distance to consensus and should ideally approach zero.
\begin{lem}
\label{lem:bound_dev}
\textbf{(Bounded variance)} Let $h_k$ denote the average of all local gradients at iteration $k$ and $\bar{h}_k$ the gradient of $\bar{f}$ at $\bar{x}_k$, i.e.
\begin{equation*}
    h_k= \frac{1}{n}\sum_{i=1}^n \nabla f_i(x_{i,k})\text{, } \bar{h}_k= \frac{1}{n}\sum_{i=1}^n \nabla f_i(\bar{x}_k).
\end{equation*}
Also let $x_{i,k}$ and $y_{i,k}$ be the local iterates produced by the stochastic NEAR-DGD$^t$ method under Assumptions~\ref{assum_fi_mu_L} and \ref{assm:err_stoch} and with steplength $\alpha$ satisfying
\begin{equation*}
    \alpha \leq \min_i \bigg ( \frac{2}{\mu_i + L_i} \bigg ).
\end{equation*}
Then the following bounds hold for all $i=1,...,n$ and \mbox{$k=0,1,2,...$}
\begin{gather*}
     \mathbb{E}[\|x_{i,k}-\bar{x}_k\|^2] \leq \beta^{2t} D^2,\\
     \mathbb{E}[\|h_k - \bar{h}_k\|^2]\leq \beta^{2t} L^2 D^2, \\
     \mathbb{E}[\|y_{i,k}-\bar{y}_k\|^2]\leq 2\beta^{2t} D^2 + 8D^2,
\end{gather*}
where $\beta$ is the second largest singular value of $\mathbf{W}$.
\end{lem}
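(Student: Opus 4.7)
The plan is to prove the three bounds in order, with the first feeding directly into the other two. Two standing facts do most of the work: the projection identity $\mathcal{M}_n \mathbf{Z}^t = \mathbf{Z}^t \mathcal{M}_n = \mathcal{M}_n$, which follows from the double stochasticity of $\mathbf{W}$, and the spectral-gap estimate $\|\mathbf{Z}^t - \mathcal{M}_n\| = \beta^t$, which follows from $\mathbf{W}$ being symmetric doubly stochastic (so its eigenvalues other than $1$ have modulus at most $\beta$, and removing the consensus direction leaves a matrix of operator norm $\beta^t$). The bounded-iterates Lemma~\ref{lem:bounded_iter} provides $\mathbb{E}[\|\mathbf{y}_k\|^2] \leq D^2$ and $\mathbb{E}[\|\mathbf{x}_k\|^2] \leq D^2$, which I will invoke repeatedly.

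For the first bound, I would observe that (\ref{eq:near_dgd_it_n_1}) together with the projection identity yields
\begin{equation*}
\mathbf{x}_k - \mathcal{M}_n \mathbf{x}_k = \mathbf{Z}^t \mathbf{y}_k - \mathcal{M}_n \mathbf{Z}^t \mathbf{y}_k = (\mathbf{Z}^t - \mathcal{M}_n)\mathbf{y}_k,
\end{equation*}
so that $\|\mathbf{x}_k - \mathcal{M}_n \mathbf{x}_k\|^2 \leq \beta^{2t}\|\mathbf{y}_k\|^2$. Since $\sum_i \|x_{i,k} - \bar{x}_k\|^2 = \|\mathbf{x}_k - \mathcal{M}_n \mathbf{x}_k\|^2$, each summand is bounded by the sum, and passing to expectations with Lemma~\ref{lem:bounded_iter} gives the desired $\beta^{2t} D^2$ estimate.

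For the second bound, I would apply $L$-smoothness term-by-term to obtain $\|\nabla f_i(x_{i,k}) - \nabla f_i(\bar{x}_k)\|^2 \leq L^2 \|x_{i,k}-\bar{x}_k\|^2$ with $L = \max_i L_i$, then combine Jensen's inequality $\|\tfrac{1}{n}\sum_i v_i\|^2 \leq \tfrac{1}{n}\sum_i \|v_i\|^2$ with the first bound to conclude
\begin{equation*}
\mathbb{E}[\|h_k - \bar{h}_k\|^2] \leq \frac{L^2}{n}\,\mathbb{E}\bigl[\|\mathbf{x}_k - \mathcal{M}_n \mathbf{x}_k\|^2\bigr] \leq \beta^{2t} L^2 D^2.
\end{equation*}

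The third bound is the main obstacle, since the stochastic gradient $\mathbf{g}(\mathbf{x}_k, \pmb{\xi}_k)$ is not directly controlled by a deterministic norm estimate. My plan is to use the update rule (\ref{eq:near_dgd_it_n_2}) to write
\begin{equation*}
(I - \mathcal{M}_n)\mathbf{y}_{k+1} = (\mathbf{x}_k - \mathcal{M}_n \mathbf{x}_k) - \alpha (I - \mathcal{M}_n)\mathbf{g}(\mathbf{x}_k, \pmb{\xi}_k),
\end{equation*}
apply Young's inequality $\|a - b\|^2 \leq 2\|a\|^2 + 2\|b\|^2$, use $\|I - \mathcal{M}_n\| \leq 1$, and exploit the key identity $\alpha \mathbf{g}(\mathbf{x}_k,\pmb{\xi}_k) = \mathbf{x}_k - \mathbf{y}_{k+1}$ to convert the stochastic-gradient magnitude into iterate magnitudes: $\alpha^2 \|\mathbf{g}(\mathbf{x}_k,\pmb{\xi}_k)\|^2 \leq 2\|\mathbf{x}_k\|^2 + 2\|\mathbf{y}_{k+1}\|^2$. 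Taking expectations, the first bound contributes $2\beta^{2t} D^2$ and the gradient piece contributes at most $2(2D^2 + 2D^2) = 8D^2$, yielding the claim for $k \geq 1$. The case $k=0$ is handled separately: $\|\mathbf{y}_0 - \mathcal{M}_n \mathbf{y}_0\|^2 \leq \|\mathbf{y}_0\|^2 \leq D^2$, which is dominated by $2\beta^{2t} D^2 + 8 D^2$. The trick of replacing $\alpha \mathbf{g}_k$ by $\mathbf{x}_k - \mathbf{y}_{k+1}$ is what prevents additional $\sigma^2$ terms from leaking into the bound.
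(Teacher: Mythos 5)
Your proposal is correct, and your first two bounds follow essentially the same path as the paper: consensus deviation of $\mathbf{x}_k$ via $\|\mathbf{Z}^t-\mathcal{M}_n\|\leq\beta^t$ together with Lemma~\ref{lem:bounded_iter}, then Jensen plus $L_i$-smoothness for the gradient averages. Where you diverge is the third bound. The paper never touches the stochastic gradient there: it decomposes $\|y_{i,k}-\bar{y}_k\|^2 \leq 2\|x_{i,k}-\bar{y}_k\|^2 + 2\|y_{i,k}-x_{i,k}\|^2$ and uses $\mathbf{y}_k-\mathbf{x}_k=(I-\mathbf{Z}^t)\mathbf{y}_k$ with $\|I-\mathbf{Z}^t\|\leq 2$, so everything stays at the same iteration $k$ and reduces directly to $\mathbb{E}[\|\mathbf{y}_k\|^2]\leq D^2$, giving $2\beta^{2t}D^2+8D^2$ with no base case needed. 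You instead project the update rule, replace $\alpha\,\mathbf{g}(\mathbf{x}_k,\pmb{\xi}_k)$ by $\mathbf{x}_k-\mathbf{y}_{k+1}$, and use $\|I-\mathcal{M}_n\|\leq 1$; this is valid, produces the identical constant $2\beta^{2t}D^2+8D^2$, and your separate treatment of $k=0$ closes the index shift correctly. The trade-off is that your route makes the stochastic gradient look like the central difficulty when in fact it is not: the relation $\mathbf{x}_k=\mathbf{Z}^t\mathbf{y}_k$ already ties $y_{i,k}$ to $x_{i,k}$ deterministically, which is why the paper's argument is shorter and needs neither the index shift nor the base case. Your remark that the substitution $\alpha\mathbf{g}_k=\mathbf{x}_k-\mathbf{y}_{k+1}$ keeps extra $\sigma^2$ terms out is accurate, though worth noting that the noise is already absorbed into $D^2$ through $\Delta=n\alpha^2\sigma^2$ in Lemma~\ref{lem:bounded_iter}, so both arguments carry the same implicit dependence on $\sigma^2$.
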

\begin{proof}
Observing that $\bar{x}_k=\bar{y}_k$ from Corollary~\ref{cor:mean_iter}, we obtain
\begin{equation*}
\begin{split}
    \|x_{i,k}-\bar{x}_k\|^2 & = \|x_{i,k}-\bar{y}_k\|^2 \\
    &\leq \|\mathbf{x}_k-\mathcal{M}_n\mathbf{y}_k\|^2 \\
    & = \|\mathbf{Z}^t \mathbf{y}_k -\mathcal{M}_n\mathbf{y}_k\|^2 \\
    &\leq \|\mathbf{Z}^t - \mathcal{M}_n\|^2\|\mathbf{y}_k\|^2\\
    &\leq \beta^{2t} \|\mathbf{y}_k\|^2,
\end{split}
\end{equation*}

We furthermore have
\begin{equation*}
    \begin{split}
        \|h_k-\bar{h}_k\|^2 &=\frac{1}{n^2} \bigg\|\sum_{i=1}^n \big ( \nabla f_i(x_{i,k})-\nabla f_i(\bar{x}_k) \big)\bigg\|^2\\
        &\leq \frac{1}{n} \sum_{i=1}^n \|\nabla f_i(x_{i,k}) - \nabla f_i(\bar{x}_k)\|^2 \\
        &\leq \frac{1}{n} \sum_{i=1}^n L_i^2 \|x_{i,k} - \bar{x}_k\|^2 \\
        &\leq \beta^{2t} L^2 \|\mathbf{y}_k\|^2,
    \end{split}
\end{equation*}
where $L = \max_i L_i$. We obtain the first inequality from Cauchy-Schwarz, the second inequality from Assumption~\ref{assum_fi_mu_L} and the last inequality from the immediately previous result.

Finally, we bound the variance of the local $y_{i,k}$ iterates as
\begin{equation*}
    \begin{split}
        \|y_{i,k}-\bar{y}_k\|^2 & \leq 2\|x_{i,k} - \bar{y}_k\|^2 + 2\|y_{i,k}-x_{i,k}\|^2 \\
        &\leq 2\beta^{2t} \|\mathbf{y}_k\|^2 + 2\|\mathbf{y}_k-\mathbf{x}_k\|^2 \\
        &= 2\beta^{2t} \|\mathbf{y}_k\|^2 + 2\|\mathbf{y}_k-\mathbf{Z}^t\mathbf{y}_k\|^2 \\
        &\leq 2\beta^{2t} \|\mathbf{y}_k\|^2+ 2\|I-\mathbf{Z}^t\|^2\|\mathbf{y}_k\|^2 \\
        &\leq 2\beta^{2t} \|\mathbf{y}_k\|^2 + 8\|\mathbf{y}_k\|^2.
    \end{split}
\end{equation*}
where the equality is due to (\ref{eq:near_dgd_it_n_1}) and the last inequality due to the spectral properties of $\mathbf{W}$. 

Taking the full expectation on both sides of all previous results and applying Lemma~\ref{lem:bounded_iter} concludes the proof. 
\end{proof}

We proceed by showing that the average stochastic gradient $\bar{g}_k$ is an unbiased estimator of the true local gradient average $h_k$ with bounded variance.
\begin{lem}
\label{lem:unbiased_gk}
\textbf{(Average stochastic gradient)}
Let $\bar{g}_k = \mathcal{M}_n \mathbf{g}(\mathbf{x}_k,\pmb{\xi}_k)$ and $h_k=\frac{1}{n}\sum_{i=1}^n \nabla f_i(x_{i,k})$. Then if Assumption~\ref{assm:err_stoch} holds, $\bar{g}_k$ is an unbiased estimator of $h_k$ with bounded variance, i.e.
\begin{gather*}
    \mathbb{E}[\bar{g}_k | \mathcal{F}_k] = h_k\text{, }
    \mathbb{E}[\|\bar{g}_k - h_k \|^2| \mathcal{F}_k] \leq \frac{\sigma^2}{n}.
\end{gather*}
\end{lem}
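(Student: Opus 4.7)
The plan is to prove the two claims separately, both by expanding $\bar{g}_k$ as the arithmetic mean $\frac{1}{n}\sum_{i=1}^n g_i(x_{i,k},\xi_{i,k})$ via the definition of the averaging operator $\mathcal{M}_n$ acting on the concatenated stochastic gradient vector, and then invoking the per-agent conditions of Assumption~\ref{assm:err_stoch}.

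For unbiasedness, I would apply linearity of conditional expectation to write $\mathbb{E}[\bar{g}_k \mid \mathcal{F}_k] = \frac{1}{n}\sum_{i=1}^n \mathbb{E}[g_i(x_{i,k},\xi_{i,k}) \mid \mathcal{F}_k]$. Since $x_{i,k}$ is $\mathcal{F}_k$-measurable, tower-property / conditioning on $x_{i,k}$ together with the first part of Assumption~\ref{assm:err_stoch} gives $\mathbb{E}[g_i(x_{i,k},\xi_{i,k}) \mid \mathcal{F}_k] = \nabla f_i(x_{i,k})$, and summing recovers $h_k$.

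For the variance bound, I would set $e_{i,k} := g_i(x_{i,k},\xi_{i,k}) - \nabla f_i(x_{i,k})$ so that $\bar{g}_k - h_k = \frac{1}{n}\sum_{i=1}^n e_{i,k}$. Expanding the squared norm yields
\begin{equation*}
\mathbb{E}\bigl[\|\bar{g}_k - h_k\|^2 \bigm| \mathcal{F}_k\bigr] = \frac{1}{n^2} \sum_{i=1}^n \mathbb{E}[\|e_{i,k}\|^2 \mid \mathcal{F}_k] + \frac{1}{n^2} \sum_{i \neq j} \mathbb{E}[\langle e_{i,k}, e_{j,k}\rangle \mid \mathcal{F}_k].
\end{equation*}
The diagonal terms are each bounded by $\sigma^2$ by the second part of Assumption~\ref{assm:err_stoch}, contributing $\sigma^2/n$ in total. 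The cross terms must vanish, which is where the core difficulty lies.

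The main obstacle is justifying that the cross terms $\mathbb{E}[\langle e_{i,k}, e_{j,k} \rangle \mid \mathcal{F}_k] = 0$ for $i \neq j$. This requires the random vectors $\xi_{i,k}$ to be mutually independent across agents $i$ given $\mathcal{F}_k$; then one factor of each cross term conditions out to zero by unbiasedness of the other agent's estimator. I would note this independence as a standard, implicit accompanying hypothesis to Assumption~\ref{assm:err_stoch} (it is the same assumption that underlies the mini-batch variance-reduction interpretation highlighted in the introduction). Under that independence, the cross terms drop out and the bound $\sigma^2/n$ follows immediately, completing the proof.
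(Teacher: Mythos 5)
Your proposal is correct and takes essentially the same route as the paper: write $\bar{g}_k=\frac{1}{n}\sum_i g_i(x_{i,k},\xi_{i,k})$, get unbiasedness from linearity of conditional expectation and Assumption~\ref{assm:err_stoch}, and bound the variance by the diagonal terms to obtain $\sigma^2/n$. The only difference is that you explicitly flag the conditional independence of the $\xi_{i,k}$ across agents needed to make the cross terms $\mathbb{E}[\langle e_{i,k},e_{j,k}\rangle\mid\mathcal{F}_k]$ vanish, a hypothesis the paper's proof uses silently when it passes directly from $\mathbb{E}[\|\frac{1}{n}\sum_i e_{i,k}\|^2\mid\mathcal{F}_k]$ to $\frac{1}{n^2}\sum_i\mathbb{E}[\|e_{i,k}\|^2\mid x_{i,k}]$; making that assumption explicit is a strength of your write-up, not a gap.
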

\begin{proof}
From the definitions of $\bar{g}_k$ and $h_k$ and Assumption~\ref{assm:err_stoch} we have
\begin{equation*}
\begin{split}
    \mathbb{E}[\bar{g}_k| \mathcal{F}_k] = \frac{1}{n} \sum_{i=1}^n \mathbb{E}[ g_i(x_{i,k},\xi_{i,k}) | x_{i,k} ] =h_k,
    \end{split}
\end{equation*}
and
\begin{equation*}
\begin{split}
     \mathbb{E}[&\|\bar{g}_k - h_k\|^2 | \mathcal{F}_k]=\\&= \frac{1}{n^2} \sum_{i=1}^n \mathbb{E}[\| g_i(x_{i,k},\xi_{i,k}) - \nabla f_i (x_{i,k}) \|^2 | x_{i,k} ]\leq \frac{\sigma^2}{n},
    \end{split}
\end{equation*}
which proves the desired result. 
\end{proof}
Notice that variance bound of $\bar{g}_k$ is scaled by the total number of agents $n$, which is equivalent to centralized mini-batching with batch size $n$.

We are now ready to prove that the average iterates $\bar{x}_k$ produced by the stochastic NEAR-DGD$^t$ method converge.
\begin{thm}
\label{thm:mean_conv}
\textbf{(Distance to minimum)} 
Let $\bar{x}_k$ be the average iterates generated by the stochastic NEAR-DGD$^t$ method under Assumptions~\ref{assum_fi_mu_L} and \ref{assm:err_stoch} and let the steplength $\alpha$ satisfy 
\begin{equation*}
    \alpha \leq \min_i \bigg ( \frac{2}{\mu_i+L_i} \bigg ).
\end{equation*}
Then the distance of $\bar{x}_k$ to the solution is bounded in expectation for all $k=0,1,2,...$, namely
\begin{equation*}
    \begin{split}
        \mathbb{E}[\|\bar{x}_{k}-x^\star\|^2] &\leq c_1^k \|\bar{x}_0 - x^\star\|^2+ \frac{c_2^2 \beta^{2t}}{1-c_1} + \frac{\alpha^2 \sigma^2}{n(1-c_1)},
    \end{split}
\end{equation*}
where $c_1=(1+\psi)(1-2\alpha \gamma_{\bar{f}})$, $\psi < \frac{2\alpha\gamma_{\bar{f}}}{1-2\alpha\gamma_{\bar{f}}}$ a positive constant, $\gamma_{\bar{f}} = \frac{\mu_{\bar{f}} L_{\bar{f}}}{\mu_{\bar{f}}+L_{\bar{f}}}$ and $c_2^2=\alpha^2(1+\psi^{-1})L^2D^2$.
\end{thm}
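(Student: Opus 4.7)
The plan is to track the recursion for $\mathbb{E}[\|\bar{x}_{k+1}-x^\star\|^2]$ by using Corollary~\ref{cor:mean_iter} to view the averaged iterate as a gradient step on $\bar{f}$, corrupted by (a) a consensus error $h_k-\bar{h}_k$ and (b) a stochastic error $\bar{g}_k-h_k$. Concretely, by Corollary~\ref{cor:mean_iter}, $\bar{x}_{k+1}=\bar{x}_k-\alpha \bar{g}_k$, so writing $\bar{g}_k=\bar{h}_k+(h_k-\bar{h}_k)+(\bar{g}_k-h_k)$ and expanding the squared norm yields three distinguishable pieces that I will control independently.

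First I would take the conditional expectation with respect to $\mathcal{F}_k$. Since Lemma~\ref{lem:unbiased_gk} tells us that $\bar{g}_k-h_k$ is zero-mean with variance at most $\sigma^2/n$, all cross terms involving $\bar{g}_k-h_k$ vanish, leaving
\[
\mathbb{E}[\|\bar{x}_{k+1}-x^\star\|^2\mid\mathcal{F}_k]\leq \|\bar{x}_k-\alpha\bar{h}_k-x^\star-\alpha(h_k-\bar{h}_k)\|^2+\tfrac{\alpha^2\sigma^2}{n}.
\]
Next I would split the remaining square using Young's inequality with a free parameter $\psi>0$, giving $(1+\psi)\|\bar{x}_k-\alpha\bar{h}_k-x^\star\|^2+(1+\psi^{-1})\alpha^2\|h_k-\bar{h}_k\|^2$. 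This is where the constants $c_1$ and $c_2$ in the statement enter naturally.

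For the first term, $\bar{h}_k=\nabla\bar{f}(\bar{x}_k)$ is an exact gradient step on $\bar{f}$, which is $\mu_{\bar{f}}$-strongly convex and $L_{\bar{f}}$-smooth by Lemma~\ref{lem:global_mu_L}. Expanding the square and invoking Lemma~\ref{lem:conv_smooth} gives the standard contraction
\[
\|\bar{x}_k-\alpha\bar{h}_k-x^\star\|^2\leq (1-2\alpha\gamma_{\bar{f}})\|\bar{x}_k-x^\star\|^2,
\]
provided $\alpha\leq 2/(\mu_{\bar{f}}+L_{\bar{f}})$; the hypothesis $\alpha\leq\min_i 2/(\mu_i+L_i)$ indeed implies this, since the averages $\mu_{\bar{f}},L_{\bar{f}}$ are bounded by the corresponding maxima. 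For the second term, Lemma~\ref{lem:bound_dev} supplies the bound $\mathbb{E}[\|h_k-\bar{h}_k\|^2]\leq \beta^{2t}L^2 D^2$ after taking the full expectation.

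Combining the pieces and taking the total expectation, I obtain the one-step recursion
\[
\mathbb{E}[\|\bar{x}_{k+1}-x^\star\|^2]\leq c_1\,\mathbb{E}[\|\bar{x}_k-x^\star\|^2]+c_2^2\beta^{2t}+\tfrac{\alpha^2\sigma^2}{n},
\]
with $c_1$ and $c_2$ exactly as stated. Unrolling this geometric recursion and summing the constant perturbations yields the stated bound, where the constraint $\psi<\frac{2\alpha\gamma_{\bar{f}}}{1-2\alpha\gamma_{\bar{f}}}$ is precisely what is needed to keep $c_1<1$ so that the geometric series converges. The main obstacle is choosing the Young's inequality decomposition carefully: one must isolate the deterministic gradient-descent contraction on $\bar{f}$ from the consensus error so that Lemma~\ref{lem:conv_smooth} can be applied cleanly, and pick $\psi$ so that the consensus-error inflation factor $(1+\psi)$ does not destroy the contraction $(1-2\alpha\gamma_{\bar{f}})$ coming from strong convexity.
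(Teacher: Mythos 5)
Your proposal is correct and follows essentially the same route as the paper: expand $\bar{x}_{k+1}=\bar{x}_k-\alpha\bar{g}_k$, use the unbiasedness/variance bound of Lemma~\ref{lem:unbiased_gk} to reduce to $\|\bar{x}_k-\alpha h_k-x^\star\|^2+\alpha^2\sigma^2/n$, split with Young's parameter $\psi$, contract the exact gradient step on $\bar{f}$, bound $\|h_k-\bar{h}_k\|^2$ via Lemma~\ref{lem:bound_dev}, and unroll the resulting recursion. The only cosmetic difference is that you invoke Lemma~\ref{lem:conv_smooth} for the contraction where the paper cites Nesterov's Theorem 2.1.14 directly; the content is the same.
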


\begin{proof}
By Corollary~\ref{cor:mean_iter}, the mean iterates $\bar{x}_k$ satisfy
\begin{equation*}
    \bar{x}_{k+1} = \bar{x}_k- \alpha \bar{g}_k.
\end{equation*}
Therefore, for the sequence $\bar{x}_k$, we have
\begin{equation*}
    \begin{split}
        \|\bar{x}_{k+1}-x^\star\|^2 &= \|\bar{x}_k - \alpha \bar{g}_k - x^\star\|^2 \\
        & = \|\bar{x}_k - x^\star\|^2 - 2 \alpha \langle \bar{x}_k - x^\star,\bar{g}_k \rangle + \alpha^2 \|\bar{g}_k\|^2.
    \end{split}
\end{equation*}
Taking the conditional expectation with respect to $\mathcal{F}_k$ and applying Lemma~\ref{lem:unbiased_gk}, yields
\begin{equation*}
    \begin{split}
        \mathbb{E}[\|\bar{x}_{k+1}-x^\star\|^2 | \mathcal{F}_k] &= \|\bar{x}_k - x^\star\|^2 - 2 \alpha \langle \bar{x}_k - x^\star,h_k \rangle \\&+ \alpha^2 \mathbb{E}[\|\bar{g}_k\|^2| \mathcal{F}_k]\\
        & \leq \|\bar{x}_k - x^\star\|^2 - 2 \alpha \langle \bar{x}_k - x^\star,h_k \rangle \\&+ \alpha^2 \|h_k\|^2 + \frac{\alpha^2 \sigma^2}{n}\\
        &= \|\bar{x}_k - \alpha h_k - x^\star\|^2 + \frac{\alpha^2 \sigma^2}{n}.
    \end{split}
\end{equation*}
For the first term in the right-hand side of the previous inequality and some positive constant $\psi < \frac{2 \alpha \gamma_{\bar{f}}}{1-2\alpha\gamma_{\bar{f}}}$, we have
\begin{equation*}
    \begin{split}
     \|\bar{x}_k - \alpha h_k - x^\star \|^2 &=\|\bar{x}_k - \alpha \bar{h}_k - x^\star + \alpha (\bar{h}_k - h_k)\|^2 \\
       &\leq (1+\psi) \|\bar{x}_k - \alpha \bar{h}_k - x^\star\|^2 \\
        &+ \alpha^2(1+\psi^{-1})\|\bar{h}_k - h_k\|^2.
    \end{split}
\end{equation*}
Consider now the following optimization problem
\begin{equation}
    \min_{x \in \mathbb{R}^p} \bar{f}(x) = \frac{1}{n} \sum_{i=1}^n f_i(x).
    \label{eq:prob_avg}
\end{equation}
Notice that $\nabla \bar{f}(\bar{x}_k) = \bar{h}_k$; therefore, the quantity $\bar{x}_k - \alpha \bar{h}_k$ can be interpreted as one step of an exact gradient method for solving problem (\ref{eq:prob_avg}). Also notice that the solution for this problem is $x^\star$ and that Lemma~\ref{lem:global_mu_L} applies to $\bar{f}$. 

Given that the steplength satisfies $\alpha \leq \min_i \bigg (\frac{2}{\mu_i + L_i} \bigg ) \leq \frac{2}{\mu_{\bar{f}} + L_{\bar{f}}}$, by \cite[Theorem 2.1.14, Chapter 2]{nesterov_introductory_1998} we have
\begin{equation*}
    \begin{split}
        \|\bar{x}_{k}-\alpha \bar{h}_k -x^\star\|^2 \leq (1-2 \alpha \gamma_{\bar{f}})\|\bar{x}_k-x^\star\|^2,
    \end{split}
\end{equation*}
where $\gamma_{\bar{f}} = \frac{\mu_{\bar{f}} L_{\bar{f}}}{\mu_{\bar{f}}+L_{\bar{f}}}$.

Combining all of the above, we get
\begin{equation*}
    \begin{split}
        \mathbb{E}[\|\bar{x}_{k+1}-x^\star\|^2 | \mathcal{F}_k] &\leq (1+\psi)(1-2 \alpha \gamma_{\bar{f}}) \|\bar{x}_k - x^\star\|^2\\&+ \alpha^2(1+\psi^{-1})\|\bar{h}_k-h_k\|^2 + \frac{\alpha^2 \sigma^2}{n}.
    \end{split}
\end{equation*}
Finally, by taking the full expectation and applying Lemma~\ref{lem:bound_dev} and the definitions of $c_1$ and $c_2$, we obtain
\begin{equation*}
    \begin{split}
        \mathbb{E}[\|\bar{x}_{k+1}-x^\star\|^2] &\leq c_1 \mathbb{E}[\|\bar{x}_k - x^\star\|^2]+ c_2^2 \beta^{2t} + \frac{\alpha^2 \sigma^2}{n},
    \end{split}
\end{equation*}
or by induction
\begin{equation}
\label{eq:theorem_barx}
    \begin{split}
        \mathbb{E}[\|\bar{x}_{k}-x^\star\|^2] &\leq c_1^k \|\bar{x}_0 - x^\star\|^2 + \frac{c_2^2 \beta^{2t}}{1-c_1} + \frac{\alpha^2 \sigma^2}{n(1-c_1)},
    \end{split}
\end{equation}
which completes the proof.
\end{proof}
The second term in the right-hand side of (\ref{eq:theorem_barx}) can be interpreted as the error due to network connectivity and the third term as the error due to stochastic noise in the local gradients. The variable $\psi$ can take any value in the interval $\big(0,2\alpha\gamma_{\bar{f}}/(1-2\alpha\gamma_{\bar{f}})\big)$ and reflects a trade-off between the bounds on convergence accuracy and speed. Notice that as $\psi$ diminishes, $\lim_{\psi \rightarrow 0} c_1 = 1-2\alpha\gamma_{\bar{f}}$ and we approach the convergence rate of centralized SGD (see Lemma~\ref{lem:grad_err_cen}). However, we also have that $\lim_{\psi \rightarrow 0} c_2 = \infty$ and the network-related error term in (\ref{eq:theorem_barx}) becomes arbitrarily large. 

Lemma~\ref{lem:bound_dev} states that the distance of the local $x_{i,k}$ and $y_{i,k}$ iterates to $\bar{x}_k$ is bounded. Thus, the convergence of $\bar{x}_k$ implies that the local iterates also converge.
\begin{cor}
\label{cor:local_conv}
\textbf{(Convergence of local iterates)} Let $x_{i,k}$ and $y_{i,k}$ be the local iterates generated by the stochastic NEAR-DGD$^t$ method under  Assumptions~\ref{assum_fi_mu_L} and \ref{assm:err_stoch} with a steplength $\alpha$ satisfying
\begin{equation*}
    \alpha \leq \min_i \bigg ( \frac{2}{\mu_i+L_i} \bigg ).
\end{equation*}  
Then the distance of $x_{i,k}$ and $y_{i,k}$ to the solution is bounded for all $i=1,...,n$ and $k=0,1,2,...$
\begin{equation*}
    \begin{split}
        \mathbb{E}[\|x_{i,k}-x^\star\|^2] &\leq c_1^k \cdot 2\|\bar{x}_{0}-x^\star\|^2 + \frac{2 c_2^2 \beta^{2t}}{1-c_1} \\&+ \frac{2\alpha^2\sigma^2}{n(1-c_1)} + 2\beta^{2t}D^2.
    \end{split}
\end{equation*}
and
\begin{equation*}
\begin{split}
\mathbb{E}[\|y_{i,k}-x^\star\|^2] &\leq c_1^k \cdot 2\|\bar{x}_{0}-x^\star\|^2 + \frac{2 c_2^2 \beta^{2t}}{1-c_1} \\&+ \frac{2\alpha^2\sigma^2}{n(1-c_1)} + 4\beta^{2t}D^2 + 16D^2.
\end{split}
\end{equation*}
\end{cor}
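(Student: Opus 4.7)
The plan is to bound the distance from a local iterate to $x^\star$ by adding and subtracting the average iterate and applying the inequality $\|a+b\|^2 \le 2\|a\|^2 + 2\|b\|^2$, then combining the consensus-error bound from Lemma~\ref{lem:bound_dev} with the convergence bound on $\bar{x}_k$ from Theorem~\ref{thm:mean_conv}.

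For the $x_{i,k}$ iterates, I would first write
\begin{equation*}
\|x_{i,k}-x^\star\|^2 \leq 2\|x_{i,k}-\bar{x}_k\|^2 + 2\|\bar{x}_k - x^\star\|^2,
\end{equation*}
take the full expectation of both sides, and then substitute the bound $\mathbb{E}[\|x_{i,k}-\bar{x}_k\|^2]\leq \beta^{2t}D^2$ from Lemma~\ref{lem:bound_dev} into the first term and the bound from Theorem~\ref{thm:mean_conv} (specifically inequality~(\ref{eq:theorem_barx})) into the second term. Doubling the right-hand side of (\ref{eq:theorem_barx}) produces exactly the claimed $c_1^k\cdot 2\|\bar{x}_0-x^\star\|^2 + 2c_2^2\beta^{2t}/(1-c_1) + 2\alpha^2\sigma^2/(n(1-c_1))$, and the $2\beta^{2t}D^2$ tail term comes from the consensus-error contribution.

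For the $y_{i,k}$ iterates the argument is parallel, but I would first recall Corollary~\ref{cor:mean_iter}, which establishes $\bar{y}_k=\bar{x}_k$. This lets me write
\begin{equation*}
\|y_{i,k}-x^\star\|^2 \leq 2\|y_{i,k}-\bar{y}_k\|^2 + 2\|\bar{y}_k - x^\star\|^2 = 2\|y_{i,k}-\bar{y}_k\|^2 + 2\|\bar{x}_k - x^\star\|^2,
\end{equation*}
so that the second term is handled exactly as before via Theorem~\ref{thm:mean_conv}. For the first term I would plug in the bound $\mathbb{E}[\|y_{i,k}-\bar{y}_k\|^2] \leq 2\beta^{2t}D^2 + 8D^2$ from Lemma~\ref{lem:bound_dev}; the factor of 2 out front then yields the claimed tail $4\beta^{2t}D^2 + 16D^2$.

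There is no real obstacle here — the proof is a direct assembly of earlier results. The only thing worth being careful about is correctly routing the $\bar{y}_k=\bar{x}_k$ identity so that the $y_{i,k}$ bound can reuse the same convergence guarantee on $\bar{x}_k$ rather than requiring a separate argument for $\bar{y}_k$, and making sure the factor-of-two inflation from the triangle-type inequality is tracked consistently across all four terms.
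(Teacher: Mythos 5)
Your proposal is correct and follows essentially the same route as the paper: decompose via the average iterate with the inequality $\|a+b\|^2 \le 2\|a\|^2+2\|b\|^2$, use $\bar{y}_k=\bar{x}_k$ from Corollary~\ref{cor:mean_iter} for the $y_{i,k}$ case, and then combine Lemma~\ref{lem:bound_dev} with Theorem~\ref{thm:mean_conv} after taking full expectations. All constants are tracked correctly, so nothing further is needed.
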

\begin{proof}
Notice that for the local $x_{i,k}$ iterates we have
\begin{equation*}
\begin{split}
     \|x_{i,k}-x^\star\|^2 & \leq \|x_{i,k} - \bar{x}_k + \bar{x}_k - x^\star\|^2\\
     &\leq 2\|\bar{x}_k - x^\star\|^2+2\|x_{i,k} - \bar{x}_k\|^2.
\end{split}
\end{equation*}
Similarly, for the local $y_{i,k}$ iterates we have
\begin{equation*}
\begin{split}
     \|y_{i,k}-x^\star\|^2 &= \|y_{i,k} - \bar{y}_k + \bar{y}_k - x^\star\|^2\\
     &\leq 2\|y_{i,k} - \bar{y}_k\|^2 + 2\|\bar{x}_k - x^\star\|^2
\end{split}
\end{equation*}
where in the last inequality we used the fact that $\bar{y}_k=\bar{x}_k$ from Corollary~\ref{cor:mean_iter}. Calculating the full expectation on both sides of each inequality and applying Lemma~\ref{lem:bound_dev} and Theorem~\ref{thm:mean_conv} completes the proof. 
\end{proof}

For the remaining two theorems of this section, consider the variant of the NEAR-DGD method where we increase the number of consensus steps by one at every iteration, i.e. $t(k)=k$. We will refer to this variant as NEAR-DGD$^+$.

\begin{thm}\textbf{(Convergence neighborhood of stochastic NEAR-DGD$^{+}$)} 
\label{thm:ndgd_p_error}
Let $\bar{x}_k$ be the average iterates produced by the stochastic NEAR-DGD$^{+}$ method with steplength $\alpha \leq \min_i \big( \frac{2}{\mu_i + L_i}\big)$ and let Assumptions~\ref{assum_fi_mu_L} and \ref{assm:err_stoch} hold. Then $\bar{x}_k$ converges to a neighborhood of the optimal solution with size $\mathcal{O}\big(\frac{\alpha^2\sigma^2}{n(1-c_1)}\big)$ where $c_1=(1+\psi)(1-2\alpha \gamma_{\bar{f}})$, $\psi < 2\alpha\gamma_{\bar{f}}/(1-2\alpha\gamma_{\bar{f}})$ a positive constant$, \gamma_{\bar{f}}=\frac{\mu_{\bar{f}} L_{\bar{f}}}{\mu_{\bar{f}} + L_{\bar{f}}}$ and $L_{\bar{f}},{\mu_{\bar{f}}}$ are defined in Lemma~\ref{lem:global_mu_L}.
\end{thm}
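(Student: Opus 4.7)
The plan is to mimic the derivation used in Theorem~\ref{thm:mean_conv}, with the only substitution being that the consensus parameter $t$ is replaced by $t(k)=k$ at iteration $k$. A preliminary observation is that Lemma~\ref{lem:bounded_iter} continues to hold verbatim for NEAR-DGD$^{+}$, since its proof uses only $\|\mathbf{Z}^t\|\leq 1$ and $\|I-\mathbf{Z}^t\|\leq 2$, both valid for any nonnegative integer $t$. Consequently Lemma~\ref{lem:bound_dev} also applies, yielding at iteration $k$ the bound $\mathbb{E}[\|h_k-\bar{h}_k\|^2]\leq \beta^{2k}L^2D^2$. Reusing the argument of Theorem~\ref{thm:mean_conv} up to this substitution then produces the one-step recursion
\begin{equation*}
    \mathbb{E}[\|\bar{x}_{k+1}-x^\star\|^2]\leq c_1\,\mathbb{E}[\|\bar{x}_{k}-x^\star\|^2] + c_2^2 \beta^{2k} + \frac{\alpha^2 \sigma^2}{n}.
\end{equation*}

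Next I would unroll this recursion to obtain
\begin{equation*}
    \mathbb{E}[\|\bar{x}_{k}-x^\star\|^2]\leq c_1^k \|\bar{x}_0-x^\star\|^2 + c_2^2 \sum_{j=0}^{k-1} c_1^{k-1-j}\beta^{2j} + \frac{\alpha^2\sigma^2}{n}\sum_{j=0}^{k-1} c_1^j,
\end{equation*}
and bound each of the three pieces separately. The initial term decays geometrically since $c_1\in(0,1)$, the last sum is bounded by $\frac{\alpha^2\sigma^2}{n(1-c_1)}$ (the advertised noise neighborhood), and the middle network-error sum is a convolution of two geometric sequences with ratios $c_1<1$ and $\beta^2<1$. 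By the standard closed-form formula it equals $\frac{c_1^k - \beta^{2k}}{c_1-\beta^2}$ when $c_1\neq \beta^2$, and $k\,c_1^{k-1}$ when $c_1=\beta^2$; in either case it vanishes as $k\to\infty$. Taking $\limsup_{k\to\infty}$ on both sides then yields the claimed $\mathcal{O}\bigl(\frac{\alpha^2\sigma^2}{n(1-c_1)}\bigr)$ neighborhood.

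The main obstacle, and really the conceptual content of the theorem, is the treatment of the middle term: whereas Theorem~\ref{thm:mean_conv} leaves behind a residual $c_2^2\beta^{2t}/(1-c_1)$ depending on the fixed consensus budget $t$, under NEAR-DGD$^{+}$ the time-varying factor $\beta^{2k}$ turns that residual into a convolution of two contractions rather than a pure geometric series. Verifying that this convolution still decays to zero even in the borderline case $c_1=\beta^2$ is the only non-routine algebraic step; once it is handled, the stochastic-noise contribution is the sole surviving term in the limit, completing the argument.
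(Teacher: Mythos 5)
Your proposal is correct, and its skeleton is the same as the paper's: reuse the machinery of Theorem~\ref{thm:mean_conv} with $t$ replaced by $t(k)=k$ (noting, as you do, that Lemmas~\ref{lem:bounded_iter} and \ref{lem:bound_dev} hold uniformly in $t$ since $D^2$ does not depend on $t$), and then let the network-error contribution vanish so that only the stochastic term $\frac{\alpha^2\sigma^2}{n(1-c_1)}$ survives in the $\limsup$. Where you differ is in how the time-varying network error is handled, and your treatment is actually the more careful one. The paper simply asserts that the bound of Theorem~\ref{thm:mean_conv} ``transforms'' into
\begin{equation*}
\mathbb{E}[\|\bar{x}_{k}-x^\star\|^2] \leq c_1^k \|\bar{x}_0 - x^\star\|^2 + \frac{c_2^2 \beta^{2k}}{1-c_1} + \frac{\alpha^2 \sigma^2}{n(1-c_1)},
\end{equation*}
i.e.\ it substitutes $t=k$ into the fixed-$t$ formula. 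Strictly speaking this is not what the one-step recursion yields: unrolling gives the convolution $c_2^2\sum_{j=0}^{k-1}c_1^{k-1-j}\beta^{2j}$, which you correctly identify and which decays like $\max\{c_1,\beta^2\}^k$ (or $k\,c_1^{k-1}$ in the borderline case $c_1=\beta^2$), not like $\beta^{2k}/(1-c_1)$ when $\beta^2<c_1$. Your explicit closed-form evaluation of this convolution, including the degenerate case, closes that small gap; the paper itself only makes the decay rate rigorous later, in Theorem~\ref{thm:ndgd_p_rate}, via an induction with $\theta=\max\{\beta^2,(c_1+1)/2\}$. Either way the limiting neighborhood is the same, so your argument proves the stated theorem and, as a bonus, already contains the quantitative decay estimate that the paper defers to the next theorem.
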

\begin{proof}
In the case of the stochastic NEAR-DGD$^{+}$ method, the result of Theorem~\ref{thm:mean_conv} transforms to 
\begin{equation*}
    \begin{split}
        \mathbb{E}[\|\bar{x}_{k}-x^\star\|^2] &\leq c_1^k \|\bar{x}_0 - x^\star\|^2 + \frac{c_2^2 \beta^{2k}}{1-c_1} + \frac{\alpha^2 \sigma^2}{n(1-c_1)},
    \end{split}
\end{equation*}
As the number of iterations increases, $\lim_{k \rightarrow \infty} \beta^{2k} = 0$. We therefore have
\begin{equation*}
    \lim_{k \rightarrow \infty} \sup  \mathbb{E}[\|\bar{x}_{k}-x^\star\|^2] =  \frac{\alpha^2 \sigma^2}{n(1-c_1)},
\end{equation*} 
which proves the desired result.
\end{proof}
Notice that when $\psi$ approaches zero, $\lim_{\psi \rightarrow 0} 1-c_1 = 2 \alpha \gamma_{\bar{f}}$ and we approach the error neighborhood of centralized mini-batching with $n$ samples as per Lemma~\ref{lem:grad_err_cen}.

\begin{thm}\textbf{(Linear convergence of stochastic NEAR-DGD$^{+}$)}
\label{thm:ndgd_p_rate}
Let $\bar{x}_k$ be the average iterates produced by the stochastic NEAR-DGD$^{+}$ method with steplength $\alpha \leq \min_i \big( \frac{2}{\mu_i + L_i}\big)$ under Assumptions~\ref{assum_fi_mu_L} and \ref{assm:err_stoch}. Then $\bar{x}_k$ converges in expectation to a neighborhood of the solution with linear rate
\begin{equation*}
    \begin{split}
        \mathbb{E}[\|\bar{x}_{k}-x^\star\|^2] &\leq C \theta^k + \frac{\alpha^2\sigma^2}{n(1-c_1)},
    \end{split}
\end{equation*}
where 
\begin{equation*}
\begin{split}
    C = \max \bigg \{ \|\bar{x}_0-x^\star\|^2,\frac{2c_2^2}{1-c_1}\bigg \},\theta = \max \bigg\{\beta^2,\frac{c_1+1}{2}\bigg \}.
    \end{split}
\end{equation*}
\end{thm}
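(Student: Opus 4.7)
The plan is to avoid carrying out the explicit unrolling into a sum over mixed powers of $c_1$ and $\beta^2$, and instead prove the bound by direct induction on $k$. The starting point is the per-iteration recursion that was already established inside the proof of Theorem~\ref{thm:mean_conv}, namely
\begin{equation*}
    \mathbb{E}[\|\bar{x}_{k+1}-x^\star\|^2] \leq c_1\,\mathbb{E}[\|\bar{x}_{k}-x^\star\|^2] + c_2^2\,\beta^{2t(k)} + \frac{\alpha^2\sigma^2}{n}.
\end{equation*}
For NEAR-DGD$^+$ one has $t(k)=k$, so the second term becomes $c_2^2\beta^{2k}$, which itself decays geometrically but at a different rate than $c_1$; the purpose of the choice $\theta=\max\{\beta^2,(c_1+1)/2\}$ is exactly to produce a single geometric rate that dominates both.

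First I would verify that $\theta<1$ and $\theta>c_1$. The first follows because the hypothesis $\psi<2\alpha\gamma_{\bar f}/(1-2\alpha\gamma_{\bar f})$ gives $c_1<1$ and $\beta<1$ from the spectral properties of $\mathbf W$. The second is immediate since $(c_1+1)/2>c_1$ whenever $c_1<1$, so that in fact
\begin{equation*}
    \theta-c_1 \;\geq\; \frac{c_1+1}{2}-c_1 \;=\; \frac{1-c_1}{2}.
\end{equation*}
This explicit gap is the algebraic reason the constant $C$ can be chosen as claimed.

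The induction hypothesis would be
\begin{equation*}
    \mathbb{E}[\|\bar{x}_{k}-x^\star\|^2] \leq C\theta^k + \frac{\alpha^2\sigma^2}{n(1-c_1)},
\end{equation*}
which is trivially true at $k=0$ because $C\geq\|\bar{x}_0-x^\star\|^2$ by definition. For the inductive step I would plug the hypothesis into the recursion: the noise contributions combine cleanly since
\begin{equation*}
    \frac{c_1\alpha^2\sigma^2}{n(1-c_1)} + \frac{\alpha^2\sigma^2}{n} \;=\; \frac{\alpha^2\sigma^2}{n(1-c_1)},
\end{equation*}
so the noise term simply reappears. What remains is to show that $c_1C\theta^k+c_2^2\beta^{2k}\leq C\theta^{k+1}$, which after dividing by $\theta^k$ becomes $c_1C + c_2^2(\beta^2/\theta)^k\leq C\theta$. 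Because $\beta^2\leq\theta$, the worst case $(\beta^2/\theta)^k\leq 1$ reduces the requirement to $c_2^2\leq C(\theta-c_1)$, and using the gap lower bound this is implied by $C\geq 2c_2^2/(1-c_1)$, which holds by the definition of $C$.

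The main obstacle is really just the algebraic bookkeeping around the choice of $\theta$: because $c_1$ and $\beta^2$ are unrelated and can be arbitrarily close, a naive telescoping of the mixed geometric sum produces a $1/|c_1-\beta^2|$ factor that is not present in the claimed bound. The $(c_1+1)/2$ trick finesses this by deliberately leaving a guaranteed slack $\theta-c_1\geq(1-c_1)/2$, which is what allows the neat closed-form constants. Everything else is routine substitution into the recursion and induction.
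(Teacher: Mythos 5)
Your proposal is correct and follows essentially the same route as the paper: an induction on $k$ using the recursion from Theorem~\ref{thm:mean_conv} with $t(k)=k$, bounding $\beta^{2k}/\theta^k\leq 1$ via $\beta^2\leq\theta$, and closing the step with $c_2^2/C\leq(1-c_1)/2$ and $c_1+(1-c_1)/2=(c_1+1)/2\leq\theta$. The only difference is presentational (you make the gap $\theta-c_1\geq(1-c_1)/2$ and the cancellation of the noise terms explicit), so there is nothing further to add.
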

\begin{proof}
We will prove this theorem by induction. The result holds trivially for $k=0$ and let it also hold at iteration $k$. Then at iteration $k+1$, by Theorem~\ref{thm:mean_conv} we have
\begin{equation*}
\label{eq:conv_rate}
    \begin{split}
        \mathbb{E}[\|\bar{x}_{k+1}-x^\star\|] &\leq c_1 \mathbb{E}[\|\bar{x}_k - x^\star\|^2]+ c_2^2 \beta^{2k} + \frac{\alpha^2 \sigma^2}{n},\\
        &\leq c_1\bigg (C \theta^k + \frac{\alpha^2\sigma^2}{n(1-c_1)}\bigg)  +  c_2^2\beta^{2k} + \frac{\alpha^2\sigma^2}{n}\\
        &= C \theta^k \bigg (c_1 + \frac{c_2^2 \beta^{2k}}{C \theta^k} \bigg) + \frac{\alpha^2\sigma^2}{n(1-c_1)},
    \end{split}
\end{equation*}
where we used the assumption that the result holds at iteration $k$. 

We furthermore have
\begin{equation*}
    \begin{split}
        \mathbb{E}[\|\bar{x}_{k+1}-x^\star\|] &\leq C \theta^k \bigg(c_1 + \frac{c_2^2}{C} \bigg )+ \frac{\alpha^2\sigma^2}{n(1-c_1)} \\
        &\leq C \theta^k \bigg (c_1 + \frac{1-c_1}{2} \bigg ) + \frac{\alpha^2\sigma^2}{n(1-c_1)} \\
        &\leq C \theta^{k+1} + \frac{\alpha^2\sigma^2}{n(1-c_1)}, 
    \end{split}
\end{equation*}
where the first inequality is derived from the definition of $\theta$, the second inequality from the definition of $C$ and the third inequality from the definition of $\theta$. 

We have therefore demonstrated that the result holds at iteration $k+1$, which concludes the proof. 
\end{proof}
Theorems \ref{thm:ndgd_p_error} and \ref{thm:ndgd_p_rate} indicate that it is necessary to increase the number of consensus steps per iteration in order to suppress network-related error and achieve comparable performance to centralized mini-batching with the stochastic NEAR-DGD method. 

\section{Numerical Results}
\label{sec:res}
Consider the following logistic regression problem for binary classification
\begin{equation*}
 \min_{x \in \mathbb{R}^p} f(x) = \frac{1}{M}\sum_{s=1}^M \log(1+e^{-b_s \langle A_s , x \rangle}) + \frac{1}{M}\|x\|^2_2,
\end{equation*}
where $M$ is the total number of samples, $A \in \mathbb{R}^{M \times p}$ a feature matrix, $p$ the problem dimension and $b \in \{-1,1\}^{M}$ a vector of labels. We can solve a scaled version of this problem in a decentralized fashion by evenly distributing the samples among $n$ nodes and setting
\begin{equation*}
    f_i(x) = \frac{1}{|S_i|}\sum_{s \in S_i} \log(1+e^{-b_s \langle A_s , x \rangle}) + \frac{1}{M}\|x\|^2_2
\end{equation*}
where $S_i$ is the set of sample indices assigned to node $i$.

We conducted a numerical experiment using the mushrooms dataset \mbox{($p=118$, $M=8120$)} \cite{mushrooms} and a random network of $n=10$ nodes generated with the Erd\H{o}s-R\'{e}nyi model with edge probability $0.5$. At every iteration, nodes randomly draw with replacement $B=16$ samples from their local distributions and compute a mini-batch gradient. We tested several variants of the stochastic NEAR-DGD method against the stochastic versions of DGD \cite{NedicSubgradientConsensus,sundharram_distributed_2010} and EXTRA \cite{extra} and DSGT \cite{pu_distributed_2018-1}. The variants of NEAR-DGD are described using the following convention: $(a,-,-)$ signifies performing $a$ consensus steps at every iteration, while NEAR-DGD $(a,b,\times 2)$ denotes starting with $a$ consensus steps per iteration and doubling them every $b$ iterations. All methods shared the same steplength ($\alpha=1$) and drew the same samples at each iteration.

The results of a typical experiment run are presented in Figure~\ref{fig:results}. In the top left position, we have plotted the squared error $\|\bar{x}_k-x^\star\|^2$ against the number of iterations for all methods and for centralized mini-batching with $16n$ samples. All methods except DGD achieve almost identical performance to centralized mini-batching. The average squared error $\|\bar{x}_k-x^\star\|^2$ of the last $2000$ iterations against the total number of consensus steps for each method is shown in the top right corner of Figure~\ref{fig:results}. NEAR-DGD $(1,-,-)$ performs slightly worse than EXTRA and DSGT in terms of accuracy, while the remaining variants of NEAR-DGD reach the same accuracy as EXTRA and DSGT at the cost of more communication rounds. Note, however, that NEAR-DGD does not store previous gradients or more than one previous iterates. In the bottom left corner of Figure~\ref{fig:results}, we show the normalized standard deviation at the final iteration $\sqrt{\frac{1}{n}\sum_{i=1}^n\|x_{i,N}-\bar{x}_N\|^2}/\|\bar{x}_N\|$, for $N=25000$ total iterations. We observe that performing multiple communication rounds improves consensus among agents, especially if their number is increased gradually. Finally, in the bottom right position, we experiment with a less well-connected topology of path graph and local batch size $B=1$. We plot the squared error $\|\bar{x}_k-x^\star\|^2$ against the number of iterations. It can be seen that NEAR-DGD is the only method that approaches the performance of centralized mini-batching. This trend was consistent through all different runs of the experiment with the combination of path graph and $B=1$, and implies NEAR-DGD might be preferable in extreme cases where the network is poorly connected and the variance of the local stochastic gradients is high.

\begin{figure}
    \centering
    \includegraphics[width=.22\textwidth]{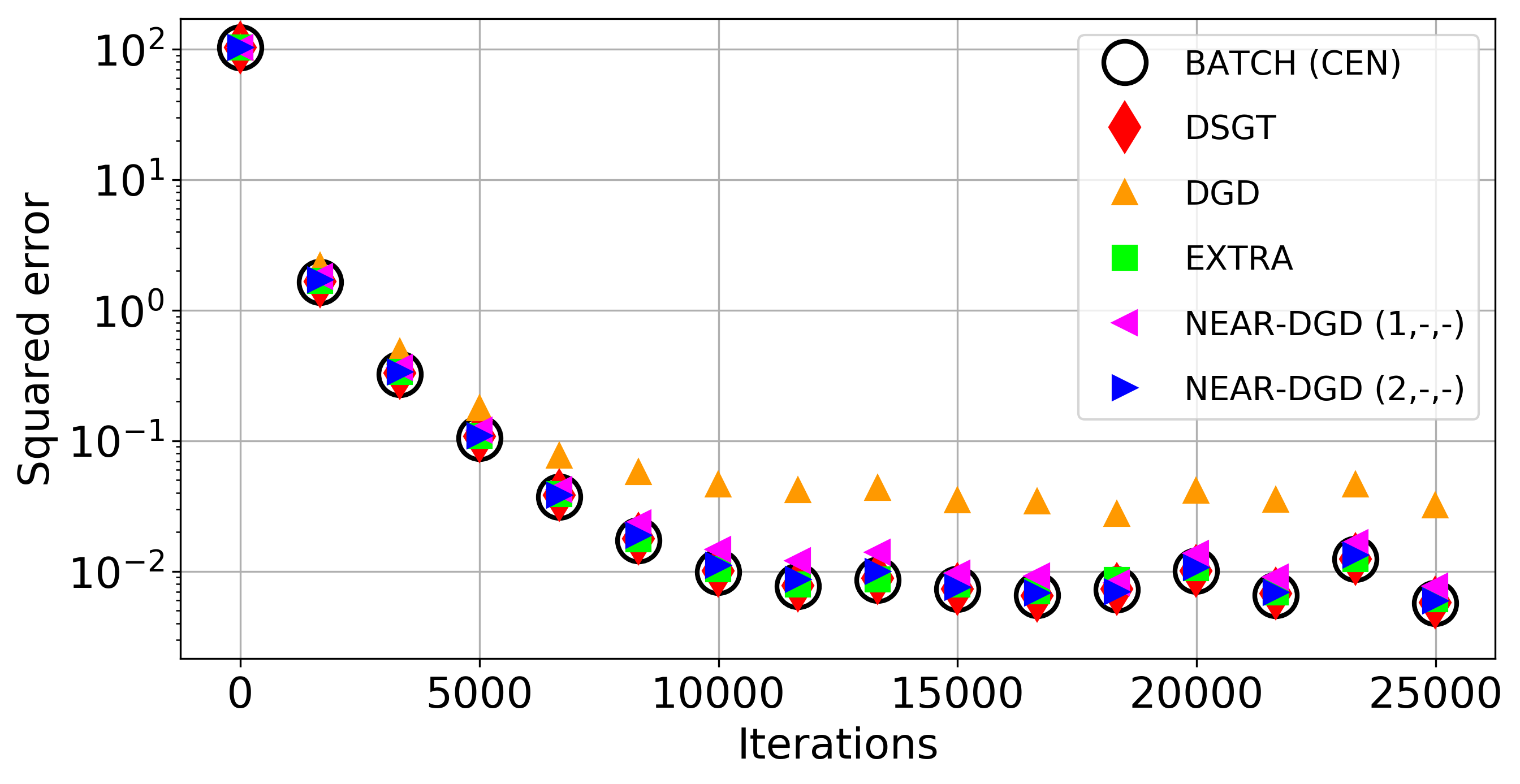}
    \includegraphics[width=.22\textwidth]{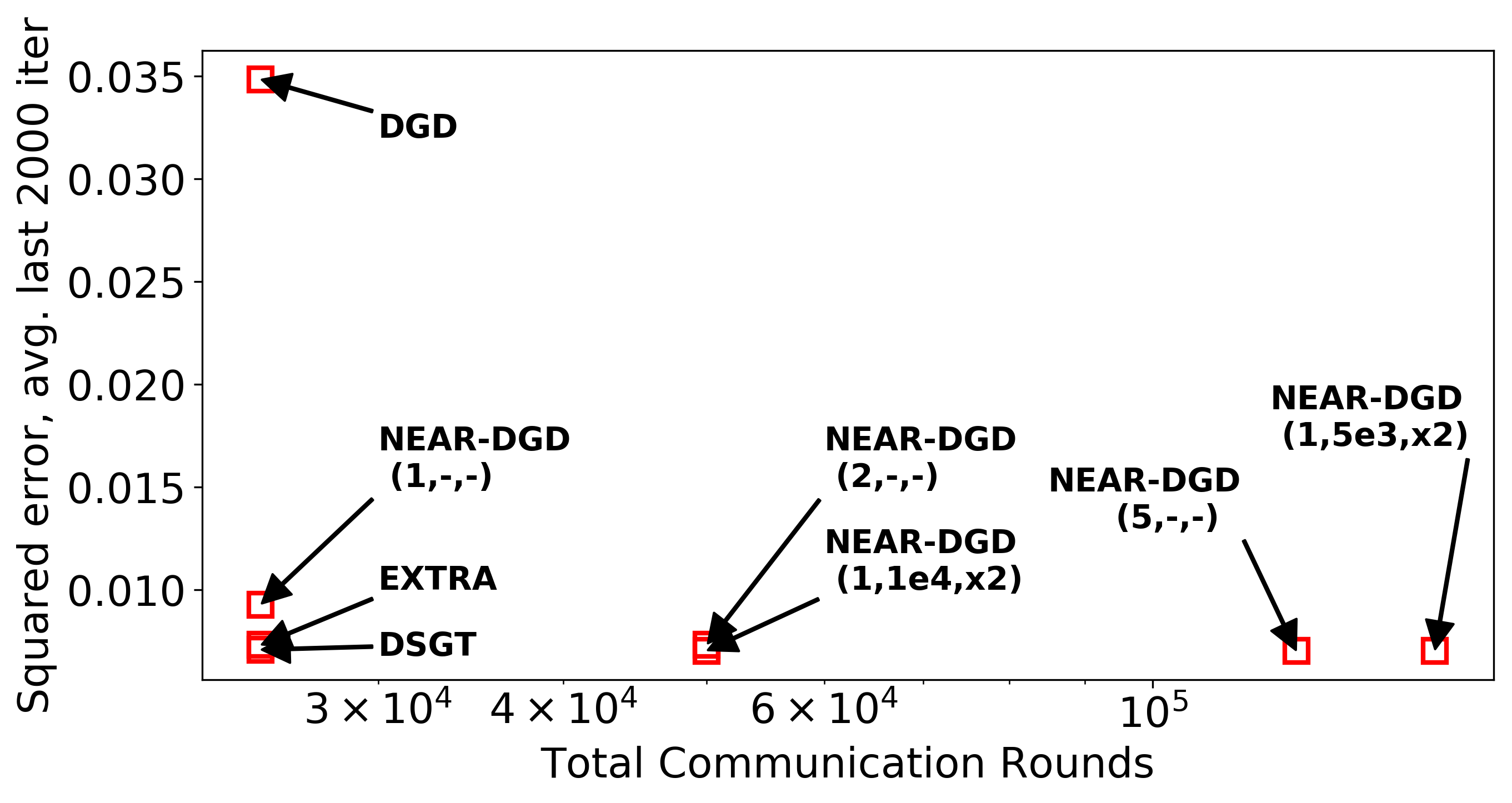}
    
    \includegraphics[width=.22\textwidth]{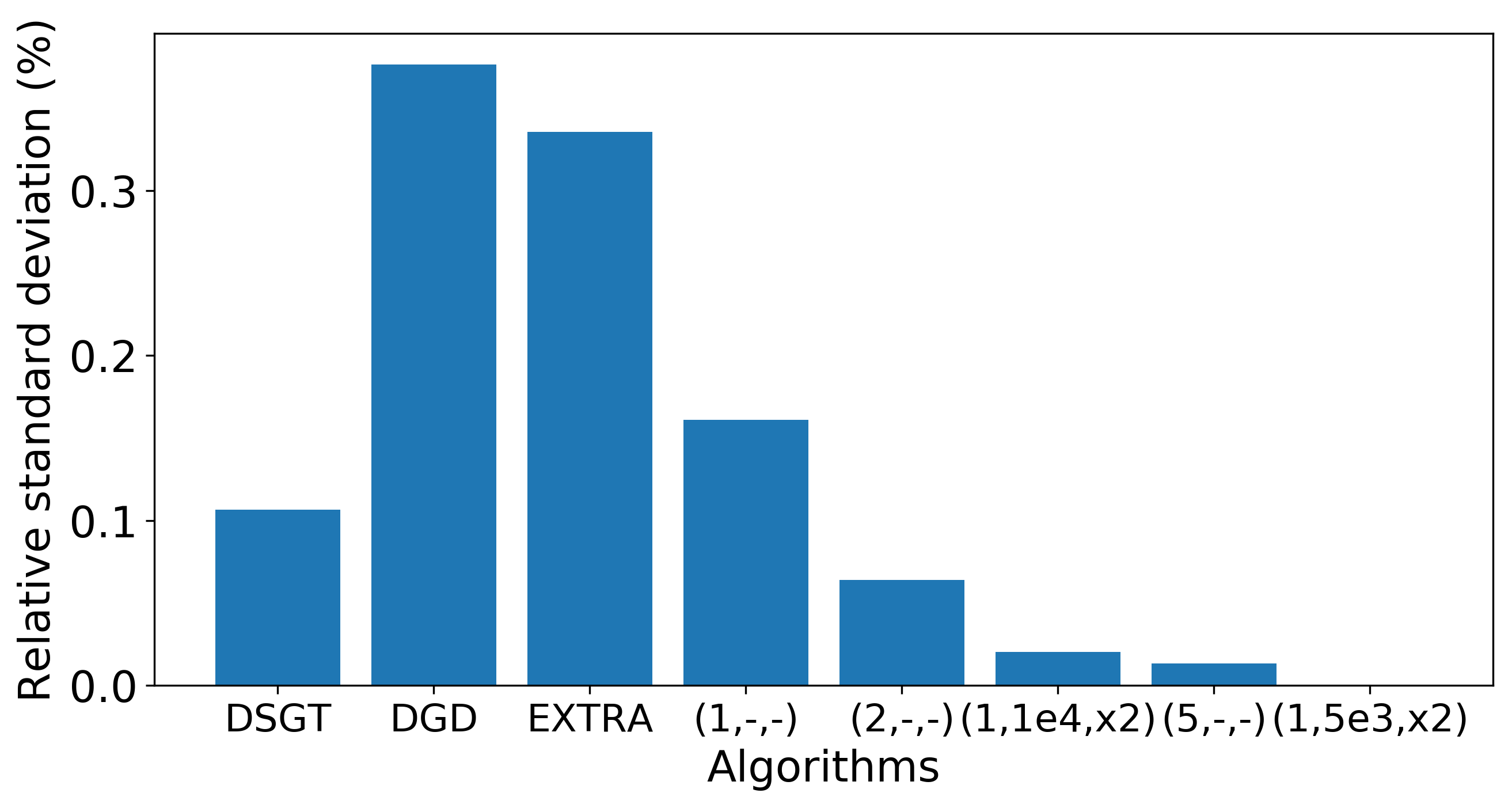}
    \includegraphics[width=.22\textwidth]{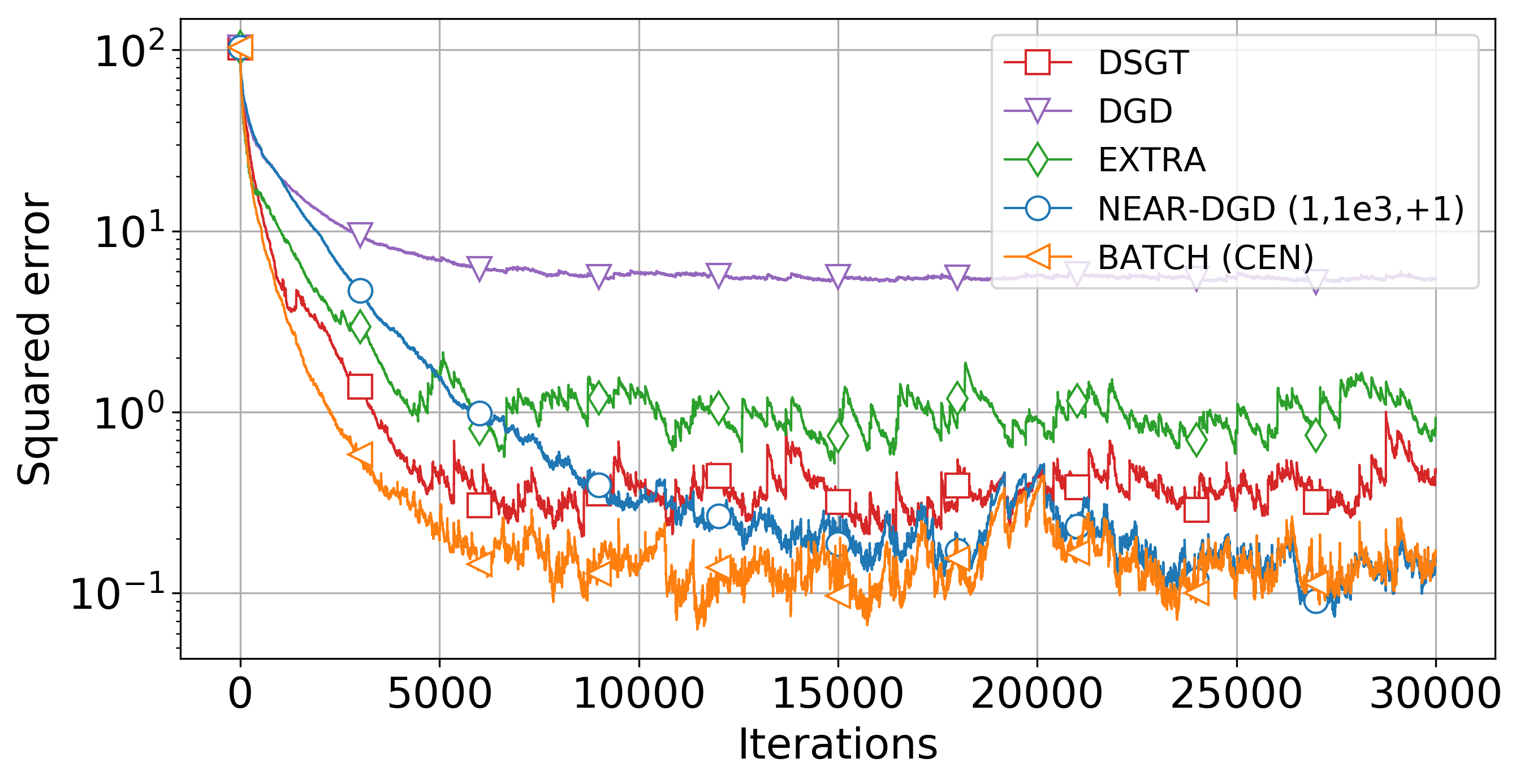}
    \caption{Squared error $\|\bar{x}_k-x^\star\|^2$ per iteration (top left), average squared error $\|\bar{x}_k-x^\star\|^2$ of the last $2000$ iterations plotted against the total number of communication rounds (top right), relative standard deviation at the final iteration $\sqrt{\frac{1}{n}\sum_{i=1}^n\|x_{i,N}-\bar{x}_N\|^2}/\|\bar{x}_N\|$ for $N=25000$ total iterations (bottom left) and squared error $\|\bar{x}_k-x^\star\|^2$ per iteration for the special case of path graph network topology and batch size $B=1$ (bottom right).}
    \label{fig:results}
\end{figure}

\section{Conclusion}
\label{sec:final}

In this paper, we analyzed the performance of Nested Distributed Gradient Method (NEAR-DGD) and its variants under the assumption of unbiased stochastic gradients with bounded variance. The strength of this method lies in its flexible framework, that alternates between gradient steps and a varying number of nested consensus steps which can be tuned depending on application-specific costs. Moreover, NEAR-DGD requires minimal storage.

Our analysis indicates that under the assumptions listed and for carefully chosen steplengths, a variant of the stochastic NEAR-DGD method converges in expectation to a neighborhood of the solution with linear rate for strongly convex functions. In addition, our method achieves a variance reduction effect similar to mini-batching, a trait that it shares with a number of other stochastic distributed algorithms. 

Finally, our numerical results show that our method is able to achieve comparable accuracy and convergence rates to other state-of-the-art algorithms. In addition to that, it accomplishes a stronger consensus between agents as opposed to other methods as a result of performing multiple communication rounds per iteration.









\bibliographystyle{ieeetr}
\bibliography{ref}

\end{document}